\DeclareMathOperator{\sign}{sign}
\def \R{\mbox{${\mathbb R}$}}
\def \S{\mbox{${\mathbb S}$}}
\def \J{\mbox{$\mathrm{J}$}}
\def \h{\mbox{$\mathfrak{h}$}}
\titleformat{\subsection}[runin]
{\bfseries} {\thesubsection{.}}{0.15cm}{}[.]
\titleformat{\subsubsection}[runin]
{\em}{\thesubsubsection{.}}{0.15cm}{}[.]
\newtheorem{theorem}{Theorem}[section]
\newtheorem{proposition}[theorem]{Proposition}
\newtheorem{remark}[theorem]{Remark}
\newtheorem{definition}[theorem]{Definition}
\theoremstyle{definition}
\numberwithin{equation}{section}
\numberwithin{figure}{section}
\begin{document}
\fancyhead[LO]{Helicoidal surfaces of prescribed mean curvature}
\fancyhead[RE]{Aires E. M. Barbieri}
\fancyhead[RO,LE]{\thepage}

\thispagestyle{empty}

\begin{center}
{\bf \LARGE Helicoidal surfaces of prescribed mean curvature in $\R^3$}
\vspace*{5mm}

\hspace{0.2cm} {\Large Aires E. M. Barbieri} 
 
\end{center}


\vspace*{8mm}

\begin{quote}
{\small
\noindent {\bf Abstract}\hspace*{0.1cm}
    Given a function $\mathcal{H} \in C^1(\S^2)$, an $\mathcal{H}$-surface $\Sigma$ is a surface in the Euclidean space $\R^3$ whose mean curvature $H_\Sigma$ satisfies $H_\Sigma = \mathcal{H} \circ \eta$, where $\eta$ is the Gauss map of $\Sigma$. The purpose of this paper is to use a phase space analysis to give some classification results for helicoidal $\mathcal{H}$-surfaces, when $\mathcal{H}$ is rotationally symmetric, that is, $\mathcal{H} \circ \eta = \mathfrak{h} \circ \nu$, for some $\h \in C^1([-1,1])$, where $\nu$ is the angle function of the surface. We prove a classification theorem for the case where $\h(t)$ is even and increasing for $t \in [0,1]$. Finally, we provide examples of helicoidal $\mathcal{H}$-surfaces in cases where $\h$ vanishes at some point.
}
\\

{\small
\noindent {\textbf{Mathematics Subject Classification:}}\hspace*{0.1cm}
    53A10, 53C42, 34C05, 34C40.
}
\\
{\small
\noindent {\textbf{Keywords:}}\hspace*{0.1cm}
    helicoidal surfaces, prescribed mean curvature.
}

\end{quote}


\section{Introduction}

In this article, we study the existence and classification of helicoidal surfaces $\Sigma$ of $\R^3$ whose mean curvature $H_\Sigma$ is given as a prescribed function of its Gauss map $\eta: \Sigma \to \S^2$. Specifically, given $\mathcal{H} \in C^1(\S^2)$, we are interested in finding surfaces $\Sigma$ that satisfy
$$H_\Sigma = \mathcal{H} \circ \eta.$$
Any such $\Sigma$ will be called a surface of \textit{prescribed mean curvature} $\mathcal{H}$, or simply, an \textit{$\mathcal{H}$-surface}.

The study of $\mathcal{H}$-hypersurfaces in $\R^{n+1}$ is motivated by the classical contributions of Alexandrov and Pogorelov in the 1950s (see \cite{ALEXANDROV,POGORELOV}). They started to investigate the existence and uniqueness of ovaloids in $\R^{n+1}$ determined by a prescribed curvature function through its Gauss map.

When specific functions $\mathcal{H}$ are chosen as prescribed, some geometric theories receive deeply research. The following examples are emphasized:

\begin{enumerate}
    \item Surfaces of constant mean curvature, or simply, CMC surfaces. In this case, $\mathcal{H} \equiv H_0 \in \R$ is a constant. When $H_0 = 0$, they are minimal surfaces;

    \item $\lambda$-translators, which corresponds to the case $\mathcal{H}(x) = \langle x, v \rangle + \lambda$, where $v \in \S^n$ and $\lambda \in \R$. The vector $v$ is called the density vector. When $\lambda = 0$, we have translating solitons of the mean curvature flow.
\end{enumerate}

The theory of complete, non-compact $\mathcal{H}$-surfaces for general choice of $\mathcal{H}$ has been recently developed by A. Bueno, J. Gálvez and P. Mira \cite{GLOBALGEO,ROTATIONAL}, taking as starting point the theories of CMC surfaces and translating solitons. In the first article, the authors started the development of the global theory of complete $\mathcal{H}$-hypersurfaces in $\R^{n+1}$, demonstrating that, under mild symmetry and regularity assumptions on the function $\mathcal{H}$, the theory of $\mathcal{H}$-hypersurfaces sometimes admits a uniform treatment that resembles the constant mean curvature case.

In \cite{ROTATIONAL}, focusing on the study of rotational $\mathcal{H}$-hypersurfaces in $\R^{n+1}$, in case that the function $\mathcal{H} \in C^1(\S^n)$ is rotationally symmetric, i.e. $\mathcal{H}(x)=\h(\langle x, e_{n+1} \rangle)$ for some $\h \in C^1([-1,1])$, the same authors treated the resulting  ordinary differential equation as a nonlinear autonomous system and conducted a qualitative study of its solutions through a phase space analysis. The aim was  to provide classification results for these hypersurfaces.

Motivated by these ideas, our objective in this article is to use a similar phase space analysis to study, classify and exhibit some helicoidal surfaces of prescribed mean curvature in $\R^3$.

First, in Section \ref{section1}, we define a helicoidal surface $\Sigma$ in $\R^3$ (Def. \ref{defhelicoidal}) and explicitly calculate in \eqref{meancurvaturehelicoidal} the mean curvature of $\Sigma$ in terms of its profile curve. Since the mean curvature of a helicoidal surface does not depend on its rotational parameter, we focus on the case where $\mathcal{H} \in C^1(\S^2)$ is rotationally symmetric, that is, up to Euclidian change of coordinates, there is a function $\h \in C^1([-1,1])$ such that $\mathcal{H}(x) = \h(\langle x, e_3 \rangle)$. Then, the mean curvature of a helicoidal $\mathcal{H}$-surface $\Sigma$ is given by
\begin{equation*}
    H_\Sigma = \mathcal{H} \circ \eta = \h \circ \nu.
\end{equation*}

In terms of the profile curve of $\Sigma$, we treat next the resulting ODE as the nonlinear autonomous system \eqref{mainsystem}, carrying out a qualitative study of its solutions through a phase space analysis. We also prove in Proposition \ref{propsurfaceaxis} that, up to rotations around the $e_3$-axis, there exists at most a unique helicoidal $\mathcal{H}$-surface in $\R^3$ that intersects its rotation axis.

Building on the ideas of Theorem 4.1 of \cite{ROTATIONAL}, we study in Section \ref{sectionpositive} the helicoidal $\mathcal{H}$-surfaces in cases where $\mathcal{H} \in C^1(\S^2)$ is positive, rotationally invariant and even, i.e. $\mathcal{H}(-x) = \mathcal{H}(x)>0$ for all $x \in \S^2$. Different from the rotational case, we also need to suppose that the function $\h(t)$ related to $\mathcal{H}$ is increasing for $t \in [0,1]$. Then, we arrive at the classification Theorem \ref{teoprincipal}.

Finally, in Section \ref{sectionexamples}, we provide examples of helicoidal $\mathcal{H}$-surfaces in cases where $\mathcal{H}$ vanishes at some point. The biggest problem to extend the orbits behavior of the examples to general choices of $\mathcal{H}$ is not knowing exactly how the phase space looks like. It is important to observe that, even though these examples are specific, similar functions $\mathcal{H}$ can lead to similar behaviors of the resulting helicoidal $\mathcal{H}$-surfaces. Thus, attempting to generalize these models to various choices of $\mathcal{H}$ could inspire future research in this area.


\section{Phase space analysis of helicoidal \texorpdfstring{$\mathcal{H}$}{H}-surfaces}\label{section1}

Let $\Sigma$ be an immersed, oriented surface in $\R^3$ and $\eta : \Sigma \to \S^2$ its Gauss map. Given a function $\mathcal{H} \in C^1(\S^2)$, we will say that $\Sigma$ is a \textit{surface of prescribed mean curvature} $\mathcal{H}$, or simply, \textit{$\mathcal{H}$-surface}, if its mean curvature $H_\Sigma$ is given by
\begin{equation}\label{prescribedHsurface}
    H_\Sigma = \mathcal{H} \circ \eta.
\end{equation}

When $\Sigma$ is given as a graph $x_{3} = u(x_1,x_2)$, then \eqref{prescribedHsurface} is written as the elliptic, second order quasilinear PDE
\begin{equation}\label{ellipticPDE}
    \mathrm{div}\left( \frac{\mathrm{D}u}{\sqrt{1+|\mathrm{D}u|^2}} \right) = 2\mathcal{H}(\mathrm{Z}_u), \quad \mathrm{Z}_u := \frac{(-\mathrm{D}u,1)}{\sqrt{1+|\mathrm{D}u|^2}},
\end{equation}
where $\mathrm{div}$ and $\mathrm{D}$ denote respectively the divergence and gradient operators on $\R^2$ and $\mathrm{Z}_u$ is the unit normal of the graph.

\begin{definition}\label{defhelicoidal}
    A \textit{helicoidal surface} $\Sigma$ in $\R^3$ is an immersed, oriented surface obtained as the orbit of a regular planar curve parametrized by arc-length
    $$\alpha(s) = (x(s),0,z(s)) : I \subset \R \to \R^{3}$$
    under a helicoidal motion, that is, up to isometries of $\R^3$, there exists a constant $c_0 \neq 0$ such that 
    \begin{equation}\label{parametrization}
        \Psi(s,\theta) = \left(x(s)\cos{\theta} , x(s)\sin{\theta} , z(s) + c_0\theta \right) : I \times \R \to \R^3
    \end{equation}
    is a parametrization of $\Sigma$.
\end{definition}

Up to a change of orientation, the Gauss map of a helicoidal surface $\Sigma$ is given by 
\begin{equation}\label{normalhelicoidal}
    \eta(s,\theta) = \frac{\left( c_0 \sin{\theta} x'(s) - \cos{\theta}z'(s)x(s), -\sin{\theta}x(s)z'(s) - c_0\cos{\theta}x'(s), x(s)x'(s) \right)}{\sqrt{c_0^2x'(s)^2 + x(s)^2}}. 
\end{equation}

Therefore, the \textit{angle function} $\nu := \langle \eta, e_3 \rangle$ of $\Sigma$ is given by
\begin{equation}\label{anglefunction}
    \nu(s) = \nu(s,\theta) = \frac{x(s)x'(s)}{\sqrt{c_0^2x'(s)^2 + x(s)^2}}.
\end{equation}

Note that $\|\Psi_s(s,\theta) \wedge \Psi_\theta(s,\theta)\| = 0$ if, and only if, $x(s) = x'(s) = 0$. So, we assume that $x'(s) \neq 0$ for $s \in I$ such that $x(s) = 0$ to $\Sigma$ be a regular surface.

\begin{remark}\label{remarkang1}
    In a helicoidal surface $\Sigma$, its angle function satisfies $|\nu|<1$. Indeed, for $s \in I$ such that $x'(s)=0$, we directly obtain $\nu(s) = 0 < 1$. For $s \in I$ such that $x'(s) \neq 0$, since $|x'(s)|\leq 1$ and $c_0\neq0$, we have
    $$|\nu(s)| = \frac{|x(s)||x'(s)|}{\sqrt{c_0^2x'(s)^2 + x(s)^2}} \leq \frac{|x(s)|}{\sqrt{c_0^2x'(s)^2 + x(s)^2}} < 1.$$ \qed
\end{remark}

The coefficients of the first fundamental form of $\Sigma$ are
\begin{align*}
    E &= \langle \Psi_s, \Psi_s \rangle = \cos^2(\theta)x'(s)^2 + \sin^2(\theta)x'(s)^2 + z'(s)^2 = 1, \\
    F &= \langle \Psi_s, \Psi_\theta \rangle = -\sin\theta \cos\theta x(s)x'(s) + \sin \theta\cos\theta x(s)x'(s) + c_0z'(s) = c_0z'(s), \\
    G &= \langle \Psi_\theta, \Psi_\theta \rangle = \sin^2(\theta) x^2(s) + \cos^2(\theta) x^2(s) + c_0^2 = x(s)^2 + c_0^2,
\end{align*}

and its coefficients of the second fundamental form are

\begin{align*}
    e &= \langle \Psi_{ss} , \eta \rangle = \frac{x(s)\left( z''(s)x'(s) - z'(s)x''(s)\right)}{\sqrt{c_0^2x'(s)^2 + x(s)^2}}, \\
    f &= \langle \Psi_{s\theta} , \eta \rangle = -\frac{c_0x'(s)^2}{\sqrt{c_0^2x'(s)^2 + x(s)^2}},\\
    g &= \langle \Psi_{\theta\theta} , \eta \rangle = \frac{x(s)^2z'(s)}{\sqrt{c_0^2x'(s)^2 + x(s)^2}}.
\end{align*}

Therefore, the mean curvature of a helicoidal surface $\Sigma$ is given by

\begin{align}\label{meancurvaturehelicoidal}
    H_{\Sigma} &= \frac{eG - 2fF + gE}{2(EG-F^2)} = \frac{x(z''x'-z'x'')(x^2+c_0^2) + 2c_0^2x'^2z'+x^2z'}{2\sqrt{c_0^2x'^2 + x^2}\left( x^2 + c_0^2-c_0^2z'^2\right)} \nonumber \\
    &= \frac{(x'z''-z'x'')(x^3+c_0^2x) + z'(2c_0^2x'^2+x^2)}{2(c_0^2x'^2+x^2)^{\frac{3}{2}}}.
\end{align}

Since the mean curvature of a helicoidal surface does not depend on the rotational parameter $\theta$, let $\mathcal{H} \in C^1(\S^n)$ be a rotationally symmetric function, that is, $\mathcal{H}(x) = \h(\langle x,v \rangle)$, for some $v \in \S^2$ and $\h \in C^1([-1,1])$. Up to an Euclidean change of coordinates, we can assume that $v = e_3$, and then $\mathcal{H}(x)=\h(\langle x,e_3 \rangle)$. 

Let now $\Sigma$ be a helicoidal $\mathcal{H}$-surface in $\R^3$. From \eqref{prescribedHsurface}, the mean curvature of $\Sigma$ is given by
\begin{equation}\label{relationHh}
    H_\Sigma = \mathcal{H} \circ \eta = \h \circ \nu.
\end{equation}

Therefore, from \eqref{meancurvaturehelicoidal}, the profile curve $\alpha(s)$ of $\Sigma$ satisfies
\begin{equation}\label{meancurvatureedo}
    2\h\left(\frac{xx'}{\sqrt{c_0^2x'^2 + x^2}}\right) = \frac{(x'z''-z'x'')(x^3+c_0^2x) + z'(2c_0^2x'^2+x^2)}{(c_0^2x'^2+x^2)^{\frac{3}{2}}}.
\end{equation}

Taking into account that $x'^2+z'^2=1$, we can denote $z'=\varepsilon\sqrt{1-x'^2}$, where $\varepsilon = \sign(z')$. Thus,
\begin{equation}\label{formz''}
    z'' = -\varepsilon\frac{x'x''}{\sqrt{1-x'^2}},
\end{equation}
on every subinterval $J \subset I$ where $z'(s)\neq 0$, for all $s \in J$. So, from \eqref{meancurvatureedo}, we obtain that $x(s)$ is a solution to the autonomous second
order ordinary differential equation
\begin{align}
    2(c_0^2x'^2+x^2)^{\frac{3}{2}}\h\left(\frac{xx'}{\sqrt{c_0^2x'^2 + x^2}}\right) &= -\varepsilon x''\left( \frac{x'^2}{\sqrt{1-x'^2}} + \sqrt{1-x'^2} \right)(x^3+c_0^2x) \nonumber \\
    &\phantom{==} + \varepsilon \sqrt{1-x'^2}(2c_0^2x'^2+x^2) \nonumber \\
    &= -\varepsilon\left(\frac{x''}{\sqrt{1-x'^2}}(x^3+c_0^2x) - \sqrt{1-x'^2}(2c_0^2x'^2+x^2) \right) \nonumber \\
    &= -\varepsilon\left( \frac{x''(x^3+c_0^2x) - (1-x'^2)(2c_0^2x'^2+x^2)}{\sqrt{1-x'^2}} \right), \label{EDOx''1}
\end{align}
that is, when $x \neq 0$,
\begin{equation}\label{EDOx''}
    x'' = \frac{(1-x'^2)(x^2+2c_0^2x'^2)-2\varepsilon\h\left(\frac{xx'}{\sqrt{c_0^2x'^2 + x^2}}\right)\sqrt{1-x'^2}(c_0^2x'^2+x^2)^{\frac{3}{2}}}{x^3+c_0^2x}.
\end{equation}

Denoting $y=x'$, the ODE \eqref{EDOx''} transforms into the first order autonomous system
\begin{equation}\label{mainsystem}
    \left( \begin{array}{c} x \\ y \end{array}\right)' = \left( \begin{array}{c} y \\ \frac{(1-y^2)(x^2+2c_0^2y^2)-2\varepsilon\h\left(\frac{xy}{\sqrt{c_0^2y^2 + x^2}}\right)\sqrt{1-y^2}(c_0^2y^2+x^2)^{\frac{3}{2}}}{x^3+c_0^2x} \end{array}\right) =: F(x,y).
\end{equation}

The phase space of the system is
$$\Theta_{\varepsilon} := \R\smallsetminus\{0\} \times (-1,1)$$
with coordinates $(x, y)$ denoting, respectively, the distance to the rotation axis and its variation.

If the profile curve $\alpha(s) = (x(s),0,z(s))$ intersects the $e_3$-axis, that is, $x(s)=0$ for some $s \in I$, we obtain from \eqref{EDOx''1} that
$$\varepsilon\h(0) = \frac{2(1-x'(s)^2)c_0^2x'(s)^2}{2\sqrt{1-x'(s)^2}|c_0|^3|x'(s)|^3} = \frac{\sqrt{1-x'(s)^2}}{|c_0||x'(s)|} \quad \Longrightarrow \quad \h(0)^2 = \frac{1-x'(s)^2}{c_0^2x'(s)^2},$$
that is, $\varepsilon \h(0) \geq 0$ and $\alpha(s)$ satisfies
\begin{equation}\label{eqtocaeixo}
    x'(s) = \pm \frac{1}{\sqrt{1+c_0^2\h(0)^2}}.
\end{equation}

In particular, its associated orbit in $\Theta_\varepsilon$, such that $\varepsilon\h(0) \geq 0$, must intersect the $x=0$ axis at the points 
$$p_\varepsilon := \left( 0, \frac{1}{\sqrt{1+c_0^2\mathfrak{h}(0)^2}}\right) \quad \text{or} \quad -p_\varepsilon = \left( 0, -\frac{1}{\sqrt{1+c_0^2\mathfrak{h}(0)^2}}\right).$$

Moreover, observe that no orbit in $\Theta_\varepsilon$ can have a limit point of the form $(0,y)$, with $y \neq \pm\frac{1}{\sqrt{1+c_0^2\mathfrak{h}(0)^2}}$. Indeed, since $\h \in C^1([-1,1])$ and the equation \eqref{EDOx''1} is continuous around $\Theta_\varepsilon$, we can take $x(s) \to 0$ and obtain $y(s) \to \pm\frac{1}{\sqrt{1+c_0^2\mathfrak{h}(0)^2}}$.

\begin{remark}\label{obsnegative}
    Let $\gamma_1(s)=(x(s), y(s))$ be any solution to \eqref{mainsystem}. Observe that $\gamma_2(s)=(-x(s),-y(s))$ is also a solution to the system. This means that the phase space $\Theta_\varepsilon$ is symmetric with respect to the origin. Thus, without loss of generality, it is enough to analyse the case $x(s) > 0$ and consider the phase space 
    $$\Theta_{\varepsilon} = (0,\infty) \times (-1,1).$$

    If the profile curve intersects the $e_3$-axis, its associated orbit $(x(s),y(s))$ in $\Theta_\varepsilon$ must pass through the point $\pm p_\varepsilon$ at some $s_0 \in I$ ($x(s_0)=0$). Since $x'(s_0) \neq 0$, let $\delta > 0$ such that, for $s \in (s_0,s_0+\delta)$ or $s \in (s_0-\delta,s_0)$, we would have $x(s)<0$. Note that we can analyse the orbit behavior from $\mp p_\varepsilon$, considering the symmetry, and obtain $x(s)>0$ for $s \in (s_0 - \delta, s_0 + \delta) \smallsetminus \{s_0\}$. 

    Moreover, let $\alpha_i(s) = (x_i(s),0,z_i(s))$, $i=1,2$, be the profile curve of the helicoidal $\mathcal{H}$-surface $\Sigma_i$ associated to $\gamma_i(s)$ and let $\Psi_i(s,\theta)$ be the parametrization of $\Sigma_i$, as in \eqref{parametrization}. Letting $z_0 := z_1(0)$ and considering, up to vertical translations, that $z_2(0) = z_0 + c_0\pi$, we obtain that
    $$\Psi_2(s,\theta) = \Psi_1(s,\theta + \pi),$$
    that is, up to a change of coordinates that corresponds to a rotation by $\pi$ around the $e_3$-axis, $\Sigma_1 = \Sigma_2$.

    Indeed, note that $x_2(s) = -x_1(s) = -x(s)$ and 
    $$z_2(s) = \int_0^s \varepsilon \sqrt{1-(-y(t))^2}dt + z_2(0) = \int_0^s \varepsilon \sqrt{1-(y(t))^2}dt + z_0 + c_0\pi = z_1(s) + c_0\pi.$$
    Therefore,
    \begin{align*}
        \Psi_2(s,\theta) &= \left(x_2(s)\cos{\theta},\, x_2(s)\sin{\theta},\, z_2(s) + c_0\theta \right) \\
        &= \left(-x(s)\cos{\theta},\,-x(s)\sin{\theta},\, z_1(s) + c_0\pi + c_0\theta \right) \\
        &= \left(x(s)\cos{(\theta+\pi)},\,x(s)\sin{(\theta+\pi)},\, z_1(s) + c_0(\pi +\theta) \right) \\
        &= \Psi_1(s,\theta+\pi).
    \end{align*} \qed
\end{remark}

\begin{proposition}\label{propsurfaceaxis}
    There exists at most a unique (up to rotations around the $e_3$-axis) helicoidal $\mathcal{H}$-surface in $\R^3$ that intersects its rotation axis.
\end{proposition}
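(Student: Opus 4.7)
The plan is to reduce the uniqueness claim to an ODE initial value problem for the profile curve and then invoke ODE uniqueness, combined with the symmetries already identified in the paper.

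Let $\Sigma_1,\Sigma_2$ be two helicoidal $\mathcal{H}$-surfaces with a common pitch $c_0$, both intersecting the $e_3$-axis, and let $\alpha_i(s)=(x_i(s),0,z_i(s))$ be their profile curves. I would begin with a normalization. Vertical translations are isometries that commute with the helicoidal motion and preserve the $\mathcal{H}$-surface condition (since $\mathcal{H}$ depends only on the Gauss map), so I may arrange $\alpha_i(0)=0$ for $i=1,2$. Reversing the arclength parameter $s\mapsto -s$ if needed---which does not alter $\Sigma_i$---I may also assume $x_i'(0)>0$, whereupon \eqref{eqtocaeixo} forces $x_1'(0)=x_2'(0)=1/\sqrt{1+c_0^2\h(0)^2}=:y_0$. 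The sign $\varepsilon_i=\sign z_i'(0)$ is constrained by $\varepsilon_i\h(0)\geq 0$, which fixes $\varepsilon_i$ uniquely when $\h(0)\neq 0$ and is vacuous (with $z_i'(0)=0$) when $\h(0)=0$. In either case the initial data $(\alpha_i(0),\alpha_i'(0))$ coincide.

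Next I would apply ODE uniqueness to system \eqref{mainsystem}. For $s>0$ small, the phase orbits $\gamma_i(s)=(x_i(s),x_i'(s))$ lie in $\Theta_\varepsilon=(0,\infty)\times(-1,1)$, where the vector field is $C^1$ and Picard--Lindel\"of applies. Both orbits enter this region with $\gamma_i(s)\to p_\varepsilon=(0,y_0)$ as $s\to 0^+$, so the technical core is extending uniqueness to this boundary point, where \eqref{mainsystem} is formally $0/0$. Since $y_0>0$, $x$ is strictly increasing in $s$ near $s=0$, so I would reparametrize by $x$ and study $y=y(x)$ via $y\,dy/dx=F(x,y)$. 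Differentiating \eqref{EDOx''1} once in $s$ and evaluating at $s=0$ yields an explicit expression for $x''(0)$ in terms of the partial derivatives of the numerator at $p_\varepsilon$ (under a natural nondegeneracy condition on the denominator), so the initial triple $(x(0),x'(0),x''(0))$ is fully determined; after this, the regularized equation extends across $x=0$ and a Lipschitz-type argument gives local uniqueness, which propagates globally on the common domain by standard ODE extension. Combined with $z_i'(s)=\varepsilon\sqrt{1-x_i'(s)^2}$ and $z_i(0)=0$, this yields $\alpha_1(s)=\alpha_2(s)$ for all $s\geq 0$ in the common domain.

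To finish, the $s<0$ portion is handled by Remark \ref{obsnegative}: the negative-$x$ branches of $\gamma_1,\gamma_2$ correspond, via the phase-space symmetry $(x,y)\mapsto(-x,-y)$, to orbits in $\Theta_\varepsilon$ starting at $-p_\varepsilon$, and by the same argument they agree up to the rotation by $\pi$ about $e_3$ identified there. Consequently $\Sigma_1=\Sigma_2$ modulo a rotation about the $e_3$-axis, as claimed. The main obstacle in this plan is making rigorous the uniqueness step at the degenerate boundary point $p_\varepsilon$; once that is in hand, the rest is an assembly of Picard--Lindel\"of, the boundary analysis preceding Remark \ref{obsnegative}, and the symmetries already established.
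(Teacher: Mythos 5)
Your plan is a genuinely different route from the paper's, but it leaves the central difficulty unresolved. The whole content of the proposition is uniqueness at the axis, where the system \eqref{mainsystem} is singular ($x^3+c_0^2x\to 0$), and your proposal only asserts that ``a Lipschitz-type argument gives local uniqueness'' after reparametrizing by $x$. That assertion fails as stated: writing $dy/dx=F_2(x,y)/y$ with $F_2$ the second component of \eqref{mainsystem}, the numerator $N(x,y)=(1-y^2)(x^2+2c_0^2y^2)-2\varepsilon\h(\cdot)\sqrt{1-y^2}(c_0^2y^2+x^2)^{3/2}$ satisfies $N(0,y_0)=0$ but $N(0,y)\neq 0$ for $y$ near $y_0$, $y\neq y_0$, so $F_2\sim N(0,y)/(c_0^2x)$ is unbounded near $p_\varepsilon$ and is certainly not Lipschitz there. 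The equation near $p_\varepsilon$ has the Fuchsian form $dy/dx=\lambda(y-y_0)/x+O(1)$; uniqueness of the solution through $(0,y_0)$ then hinges on the sign of $\lambda=\partial_yN(0,y_0)/(c_0^2y_0)$ (for $\lambda>0$ the model equation $y'=\lambda(y-y_0)/x$ has a one-parameter family of solutions through $(0,y_0)$), so you would have to compute $\lambda$, verify $\lambda<0$, and run a Gronwall-type argument adapted to the singularity --- none of which is in the proposal. The case $\h(0)=0$ is worse still, since then $p_\varepsilon=(0,\pm1)$ sits at a corner of $\overline{\Theta_\varepsilon}$ where $\sqrt{1-y^2}$ also degenerates. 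A secondary flaw: the normalization ``reverse $s\mapsto-s$ to get $x_i'(0)>0$'' is not available, because reversing arclength flips the angle function and the mean curvature, so the reversed curve solves the equation for $\tilde\h(t)=-\h(-t)$, not for $\h$; the legitimate normalization is the rotation by $\pi$ about the $e_3$-axis from Remark \ref{obsnegative} (which you correctly invoke later for the $x<0$ branch).

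The paper avoids the singular ODE entirely: since both surfaces contain the $e_3$-axis, it writes them locally as graphs $u_1,u_2$ over a common tangent plane at an axis point (after aligning normals via the $\pi$-rotation), observes that they solve the same quasilinear elliptic PDE \eqref{ellipticPDE} and agree along a segment of the axis, and concludes $u_1\equiv u_2$ from the Hartman--Wintner pseudoanalyticity of $u_1-u_2$ together with the maximum principle. If you want to keep the ODE route, the missing singular-point analysis is exactly what you must supply.
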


\begin{proof}
    Let $\Sigma_1$ and $\Sigma_2$ be helicoidal $\mathcal{H}$-surfaces that meet the $e_3$-axis. Then, this axis is contained in both surfaces. Let $\Psi_1(s,\theta)$ and $\Psi_2(s,\theta)$ be the parametrization of $\Sigma_1$ and $\Sigma_2$, respectively, as in \eqref{parametrization}, where, up to rotations around the $e_3$-axis and a change of coordinates, $\alpha_1(s) = (x_1(s),0,z_1(s))$ and $\alpha_2(s) = (x_2(s),0,z_2(s))$ are the profile curves of $\Sigma_1$ and $\Sigma_2$, respectively, in the plane $\Pi = \{y=0\}$, such that $\Psi_i(s,0) = \alpha_i(s)$, $i=1,2$, and $\alpha_1(0) = \alpha_2(0) = p_0$, for some $p_0 = (0,0,z_0) \in \Sigma_1 \cap \Sigma_2$.

    Consider $\delta_i := \sign(x_i'(0))$, $i=1,2$. By \eqref{normalhelicoidal}, we obtain that the Gauss map of $\Sigma_i$ in $p_0$ is given by
    $$\eta(0,0) = \frac{\left(c_0\sin(0)x_i'(0),-c_0\cos(0)x_i'(0),0\right)}{|c_0||x_i'(0)|} = -\sign(c_0)\left(0,\delta_i,0 \right).$$
    
    Let $\varepsilon \in \{-1,1\}$ such that $\varepsilon\h(0) \geq 0$, where $\h$ is given by \eqref{relationHh} in terms of $\mathcal{H}$. By \eqref{eqtocaeixo}, we have 
    $$|x_1'(0)| = |x_2'(0)| = \frac{1}{\sqrt{1+c_0^2\mathfrak{h}(0)^2}},$$
    that is, the orbit associated to the profile curve of $\Sigma_i$ passes through the point $\delta_i p_\varepsilon \in \Theta_\varepsilon$. If $\delta_1 \neq \delta_2$, we can rotate $\Sigma_2$ by $\pi$ around the $e_3$-axis and, then, by Remark \ref{obsnegative}, obtain that $\Sigma_1$ and $\Sigma_2$ have the same unit normal in $p_0$.

    On a sufficiently small ball $B_\delta(p_0) \subset \Pi$, we can see $\Sigma_1$ and $\Sigma_2$ as graphs of differential functions $u_1(x,z)$ and $u_2(x,z)$, respectively, with
    $$u_1(0,z) = u_2(0,z) = 0, \quad \forall z \in (z_0 - \delta, z_0 + \delta),$$
    where the difference $v:=u_1-u_2$ is a pseudoanalytic function, since $u_1$ and $u_2$ are solutions to the same elliptic PDE \eqref{ellipticPDE} (\cite[p. 469]{HARTMAN}). 
    
    If there exists some $(x_0,z_0) \in B_\delta(p_0)$, with $x_0 \neq 0$, such that $u_1(x_0,z_0) = u_2(x_0,z_0)$, consider $s_0, \theta_0 \in \R$ such that $\Psi_1(s_0,\theta_0) = u_1(x_0,z_0) = \Psi_2(s_0,\theta_0)$. This means that $\Psi_1(s_0,\theta)=\Psi_2(s_0,\theta)$, for all $\theta \in \R$, that is, $u_1 = u_2$ along the curve $\gamma_{s_0}(\theta) := \Psi_1(s_0,\theta)$ in $\Sigma_1$ and $\Sigma_2$. In case there exists a set of accumulated curves $\gamma_{s_0}(\theta)$, we obtain that $u_1\equiv u_2$ by pseudoanalyticity, i.e. $\Sigma_1 = \Sigma_2$. Therefore, the curves $\gamma_{s_0}$ are isolated and we can get a sufficiently small ball $B_\epsilon(p_0)$ such that $u_1\leq u_2$ in 
    $$B_\epsilon(p_0) \cap \{ x\leq 0 \}.$$
    
    By \eqref{ellipticPDE}, the functions $u_1$ and $u_2$ satisfy the same absolutely elliptic partial differential equation. It follows from \cite[Corollary 4.6]{HOPF} that $u_1 \equiv u_2$ and, therefore, up to rotations around the $e_3$-axis, $\Sigma_1 = \Sigma_2$.
\end{proof}

A point $e_0 =(x_0,y_0) \in \Theta_\varepsilon$ is a \textit{equilibrium} of \eqref{mainsystem} if $F(x_0,y_0) = 0$, i.e. $y_0=0$ and 
$$x_0^2-2\varepsilon\h\left(0\right)x_0^3 = 0 \quad \Longrightarrow \quad x_0 = \frac{1}{2\varepsilon\h(0)}.$$

Thus, if $\varepsilon\h(0) > 0$, the equilibrium in $\Theta_\varepsilon$ is the point 
$$e_0 = \left( \frac{1}{2\varepsilon\h(0)}, 0 \right)$$
and it corresponds to the case where $\Sigma$ is a right circular cylinder $\S^1(1/2\h(0)) \times \R$ in $\R^3$ of constant mean curvature $\h(0)$ and vertical rulings. Note that the cylinder does not depend on the pitch $c_0$. If $\varepsilon\h(0) \leq 0$, there is not a equilibrium in $\Theta_\varepsilon$.

The orbits $(x(s), y(s))$ provide a foliation by regular proper $C^1$ curves of $\Theta_\varepsilon$, or of $\Theta_\varepsilon \smallsetminus \{e_0\}$, in case $e_0$ exists. Since $\h \in C^1([-1,1])$, the uniqueness of the initial value problem for \eqref{mainsystem} implies that if an orbit $(x(s), y(s))$ converges to $e_0$, the value of the parameter $s$ goes to $\pm\infty$.

The nullcline which corresponds to $x'=0$ is given by the axis $y=0$ in $\Theta_\varepsilon$. Therefore, $x'(s) > 0$ if $y(s) > 0$ and $x'(s) < 0$ if $y(s) < 0$. Denote by $\Gamma_\varepsilon$ the nullcline in $\Theta_\varepsilon$ which corresponds to $y'=0$, that is, the set of points $(x,y) \in \Theta_\varepsilon$ such that
\begin{equation}\label{formgamma}
    \frac{\sqrt{1-y^2}(x^2+2c_0^2y^2)}{2(x^2+c_0^2y^2)^{\frac{3}{2}}}=\varepsilon\h\left(\frac{xy}{\sqrt{c_0^2y^2 + x^2}}\right).
\end{equation}
Alternatively, we can describe $\Gamma_\varepsilon$ as the implicit equation $F_\varepsilon(x,y) = 0$, where $(x,y) \in \Theta_\varepsilon$ and $F_\varepsilon$ is the differential function given by
\begin{equation}\label{formgammaF}
    F_\varepsilon(x,y):= 2\varepsilon\h\left(\frac{xy}{\sqrt{c_0^2y^2 + x^2}}\right)(x^2+c_0^2y^2)^{\frac{3}{2}} - (x^2+2c_0^2y^2)\sqrt{1-y^2}.
\end{equation}

Note that $\Gamma_\varepsilon$ might be empty, for example, in the case $\h\leq 0$ and $\varepsilon=1$. Furthermore, observe that, if $\Gamma_\varepsilon$ exists, it must intersect the $x=0$ axis at the points $\pm p_\varepsilon$ and intersect the $y=0$ axis only at the equilibrium $e_0$.

The values $s \in J$ where the profile curve $\alpha(s)=(x(s),0,z(s))$ of $\Sigma$ has zero geodesic curvature $\kappa_\alpha$ are those where its associated orbit $(x(s),y(s))$ belongs to $\Gamma_\varepsilon$. Indeed, using \eqref{formz''}, we obtain that for each $s \in J$,
\begin{align*}
    \kappa_\alpha(s) &= x'(s)z''(s) - x''(s)z'(s) \\
        &= -\varepsilon\frac{x'(s)^2x''(s)}{\sqrt{1-x'(s)^2}} - \varepsilon x''(s)\sqrt{1-x'(s)^2} \\
        &= -\varepsilon y'(s)\left( \frac{y(s)^2}{\sqrt{1-y(s)^2}}+\sqrt{1-y(s)^2}\right) \\
        &= -\frac{\varepsilon y'(s)}{\sqrt{1-y(s)^2}},
\end{align*}
that is, $\kappa_\alpha(s) = 0$ if, and only if, $y'(s) = 0$.

\begin{remark}\label{convergeborda}
    Given $\gamma(s) = (x(s),y(s)) \in \Theta_\varepsilon$ an orbit, if $\gamma(s) \mapsto (x_0, \pm 1)$, for some $x_0 \geq 0$, then there exists $s_0 \in \R$ such that $\gamma(s_0) = (x_0,\pm 1)$. Indeed, if $\gamma(s) \mapsto (x_0, \pm 1)$ only when $s \to \infty$, we can get a sequence \{$x(s_n):n\in\mathbb{N}\}$, monotonous for $n\geq n_0$ sufficiently large, such that $x(s_n) \to x_0$. It follows that $x'(s_n) = x(s_{n+1}) - x(s_n) \to 0$, which is a contradiction, because $x'(s)=y(s) \to \pm 1$. \qed
\end{remark}


\section{Positive mean curvature helicoidal surfaces}\label{sectionpositive}

First, observe that if $\mathcal{H}\equiv H_0 > 0$, that is, the resulting $\mathcal{H}$-surface has constant mean curvature $H_0$, we get that the curve $\Gamma_1$ is given by the points $(x,y) \in \Theta_1$ such that
$$x^2 + 2c_0^2y^2 = \frac{2H_0(x^2+c_0^2y^2)^{\frac{3}{2}}}{\sqrt{1-y^2}} \quad \Longrightarrow \quad H_0 = \frac{(x^2+2c_0^2y^2)\sqrt{1-y^2}}{2(x^2+c_0^2y^2)^{\frac{3}{2}}}.$$

So, $\Gamma_1$ corresponds to the level curve $f_1 \equiv H_0$, where
\begin{equation}\label{functionepsilon}
    f_\varepsilon(x,y) := \frac{\varepsilon(x^2+2c_0^2y^2)\sqrt{1-y^2}}{2(x^2+c_0^2y^2)^{\frac{3}{2}}}. \text{ (See Fig. \ref{curvnivel}).}
\end{equation}

\begin{figure}[h]
    \centering
    \includegraphics[scale=0.5]{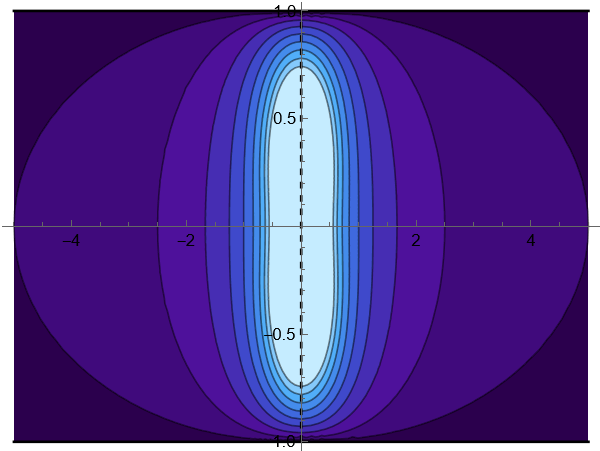}
    \caption{Level curves of $f_\varepsilon$, for different values of $H_0$.}
    \label{curvnivel}
\end{figure}

\begin{remark}
    The intersection points of the level curve $f_\varepsilon \equiv H_0$ with the $x=0$ axis are $p_\varepsilon$ and $-p_\varepsilon$. Indeed, note that for $x=0$ we have
    $$H_0 = f_\varepsilon(0,y) = \frac{\varepsilon\sqrt{1-y^2}}{|c_0||y|} \quad \Longrightarrow \quad y = \pm \frac{1}{\sqrt{1+c_0^2H_0^2}},$$
    and $\varepsilon H_0 > 0$. \qed
\end{remark}

We would like to get similar results from Theorem 4.1 of \cite{ROTATIONAL} for helicoidal $\mathcal{H}$-surfaces in case that $\mathcal{H} \in C^1(\S^2)$ is positive, rotationally invariant and even, i.e. $\mathcal{H}(-x) = \mathcal{H}(x)>0$ for all $x \in \S^2$. In terms of the function $\h$ associated to $\mathcal{H}$, this means that $\h(-y)=\h(y) > 0$ for all $y \in [-1,1]$. We need to add one more hypothesis, different from the rotational case, to guarantee the behavior of $\Gamma_1$: we suppose that the function $\h(t)$ is increasing for $t \in [0,1]$ and, so, decreasing for $t \in [-1,0]$. It is possible to get similar results for $\h(t)$ decreasing for $t \in [0,1]$ and increasing for $t \in [-1,0]$.


In this case, the curve $\Gamma_{-1}$ does not exist and $\Gamma_1$ is given by the implicit equation $F_1(x,y) = 0$, where $F_1$ is defined in \eqref{formgammaF}.

\begin{proposition}\label{propgamma1}
    If $\h(t) \in C^1([-1,1])$ is positive, even and increasing for $t\in [0,1]$, the curve $\Gamma_1$ is regular, compact and connected.
\end{proposition}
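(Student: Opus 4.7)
The plan is to use the symmetries of $F_1$ to reduce to a half-plane, establish a strict monotonicity in $x$ that relies crucially on the hypothesis that $\h$ is increasing on $[0,1]$, and conclude via the implicit function theorem. Because $\h$ is even and the expression \eqref{formgammaF} for $F_1$ depends on $\nu(x,y):=xy/\sqrt{c_0^2y^2+x^2}$ only through $\h$, $F_1$ is even in $y$, so $\Gamma_1$ is symmetric across the $x$-axis and it suffices to analyse $\Gamma_1\cap\{y\ge 0\}$. On $\Theta_1\cap\{y>0\}$ I will work with the auxiliary function
\[
G(x,y):=f_1(x,y)-\h(\nu(x,y)),
\]
with $f_1$ as in \eqref{functionepsilon}. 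Since $F_1=-2(x^2+c_0^2y^2)^{3/2}G$, one has $\Gamma_1=\{G=0\}$, and $G$ extends continuously to $x=0$ via $G(0,y)=\sqrt{1-y^2}/(|c_0|y)-\h(0)$.

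The analytical core is the strict inequality $\partial_xG<0$ on $\Theta_1\cap\{y\ge 0\}$. A direct computation gives
\[
\partial_xf_1=-\frac{\sqrt{1-y^2}\,x(x^2+4c_0^2y^2)}{2(x^2+c_0^2y^2)^{5/2}}<0,\qquad \partial_x\nu=\frac{c_0^2y^3}{(x^2+c_0^2y^2)^{3/2}}\ge 0,
\]
and since $\nu\ge 0$ when $y\ge 0$ and $\h$ is increasing on $[0,1]$, one has $\h'(\nu)\ge 0$, so $\partial_xG=\partial_xf_1-\h'(\nu)\partial_x\nu<0$. Combined with $\lim_{x\to\infty}G(x,y)=-\h(y)<0$ and with the sign of $G(0,y)$ (positive, zero, or negative exactly when $y<y_0$, $y=y_0$, or $y>y_0$, where $y_0:=1/\sqrt{1+c_0^2\h(0)^2}$), the intermediate value theorem produces a unique $\psi(y)\in[0,\infty)$ solving $G(\psi(y),y)=0$ for each $y\in[0,y_0]$, with $\psi(0)=1/(2\h(0))$ (the equilibrium $e_0$) and $\psi(y_0)=0$, and no solution for $y>y_0$.

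To obtain regularity I verify $\nabla F_1\ne 0$ on $\Gamma_1$ and invoke the implicit function theorem. At any point of $\Gamma_1$ with $x>0$ (in particular at $e_0$), the inequality $\partial_xG\ne 0$ gives $\partial_xF_1\ne 0$; at the axis point $(0,y_0)$, evenness of $F_1$ in $x$ forces $\partial_xF_1=0$, but using $|c_0|y_0\h(0)=\sqrt{1-y_0^2}$ a short calculation yields $\partial_yF_1(0,y_0)=2c_0^2y_0/\sqrt{1-y_0^2}\ne 0$, so near $(0,y_0)$ the curve is locally a $C^1$ graph of $y$ over $x$ with horizontal tangent. Reflecting the arc $\{(\psi(y),y):y\in[0,y_0]\}$ in the $x$-axis then yields $\Gamma_1=\{(\psi(|y|),y):y\in[-y_0,y_0]\}$, the continuous image of a compact interval, hence compact and connected. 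The main obstacle is the strict monotonicity $\partial_xG<0$: this is precisely where the hypothesis that $\h$ be increasing on $[0,1]$ is used, for otherwise $\h'(\nu)$ and $\partial_x\nu$ could disagree in sign and $\Gamma_1$ might fail to be a graph over $[-y_0,y_0]$, breaking the argument for connectedness.
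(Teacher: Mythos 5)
Your proof is correct, and it takes a genuinely different route from the paper. You reduce everything to the single inequality $\partial_x G<0$ on horizontal lines in $\{y\ge 0\}$ (where $G=f_1-\h\circ\nu$), which exhibits $\Gamma_1\cap\{y\ge 0\}$ as a graph $x=\psi(y)$ over $[0,y_0]$ with $\psi(0)=1/(2\h(0))$ and $\psi(y_0)=0$; compactness, connectedness, and regularity then all follow from this one structural fact, the last via $\partial_xF_1=-2(x^2+c_0^2y^2)^{3/2}\partial_xG\ne 0$ on $\Gamma_1$ together with your separate check $\partial_yF_1(\pm p_1)\ne 0$ at the axis endpoints. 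The paper instead proves the three properties independently: compactness by trapping $\Gamma_1$ in the region $\{H_m\le f_1\le H_M\}$ with $H_m,H_M$ the extrema of $\h$; regularity by passing to coordinates $(u,v)=(x^2,\nu)$ and showing the two partials of the transformed defining function cannot vanish simultaneously on $\Gamma_1$ (a considerably heavier computation than yours); and connectedness by contradiction against a hypothetical second component, using that along a horizontal line $y=y_0$ the function $\h_0(x)=\h(\nu(x,y_0))$ is increasing while $g_0(x)=f_1(x,y_0)$ is strictly decreasing. That last pair of monotonicities is exactly your $\partial_xG<0$, so the two arguments share the same essential ingredient — and the same use of the hypothesis that $\h$ is increasing on $[0,1]$ — but you deploy it globally and obtain the stronger conclusion that $\Gamma_1$ is a graph over $[-y_0,y_0]$, which the paper does not state. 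One small point common to both treatments: the endpoints $\pm p_1$ lie on $\{x=0\}$, outside $\Theta_1$, so they must be adjoined to $\Gamma_1$ for compactness to hold; your continuous extension of $G$ to $x=0$ makes this explicit, whereas the paper simply asserts $\pm p_1\in\Gamma_1$.
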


\begin{proof}
    First, note that we can describe $\Gamma_1$ as the set
    $$\left\{(x,y) \in \Theta_1 : f_1(x,y)=\h\left(\frac{xy}{\sqrt{c_0^2y^2 + x^2}}\right) \right\},$$
    where $f_1$ is the smooth function defined in \eqref{functionepsilon}. Moreover, observe that $\Gamma_1$ is symmetric with respect to the $y=0$ axis, by reason of $\h$ is even.
    
    Since $[-1,1]$ is compact and $\h$ is continuous, there are minimum $H_m$ and maximum $H_M$ values of the $\h$ function, that is, $0<H_m\leq \h \leq H_M$. Thus, considering that $\pm p_1 \in \Gamma_1$, we get that $\Gamma_1$ is compact because it is a closed set contained in the compact region
    $$D:=\{ H_m \leq f_1(x,y) \leq H_M \} \cap  \{ x \geq 0 \}. \text{  (See Fig. \ref{regionfunction}).}$$
    
    \begin{figure}[h]
        \centering
        \includegraphics[scale=0.5]{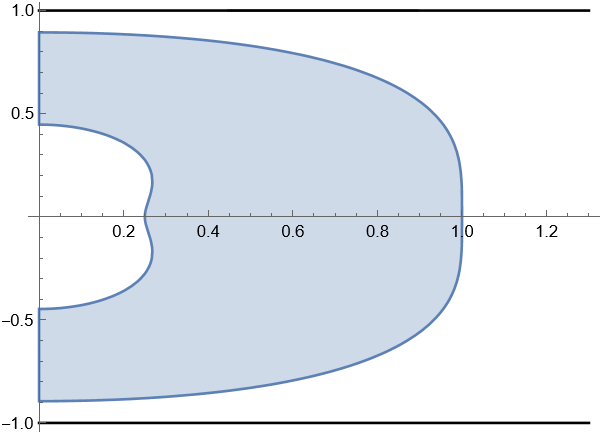}
        \caption{A sketch of the $D$ region.}
        \label{regionfunction}
    \end{figure}
    
    Consider the new variables
    $$u=x^2 \quad \text{and} \quad v=\frac{xy}{\sqrt{x^2+c_0^2y^2}}.$$
    Note that the application $G: \R^2 \to \R^2$ given by $G(x,y) = \left(x^2,\frac{xy}{\sqrt{x^2+c_0^2y^2}} \right)$ is a diffeomorphism from $\Theta_1$ to $G(\Theta_1)$ that preserves orientation, because the Jacobian determinant
    $$\J G(x,y) = \left|\begin{array}{cc}
        2x & 0 \\
        \frac{c_0^2y^3}{(x^2+c_0^2y^2)^{\frac{3}{2}}} & \frac{x^3}{(x^2+c_0^2y^2)^{\frac{3}{2}}}
    \end{array} \right| = \frac{2x^4}{(x^2+c_0^2y^2)^{\frac{3}{2}}} > 0$$ 
    is positive, for all $(x,y) \in \Theta_1$. Furthermore, we obtain
    $$G(\Theta_1) = \left\{ (u,v) \in \R^2 : u > 0, |v| < \sqrt{\frac{u}{u+c_0^2}} \right\}.$$ 
    
    Considering the changing of variables, we obtain
    $$v^2=\frac{x^2y^2}{x^2+c_0^2y^2} = \frac{uy^2}{u+c_0^2y^2} \quad \Longrightarrow \quad y^2 = \frac{v^2u}{u-c_0^2v^2}.$$
    
    It is important to observe that $u-c_0^2v^2 > 0$ in $G(\Theta_1)$, because
    $$u-c_0^2v^2 = x^2 - \frac{c_0^2x^2y^2}{x^2+c_0^2y^2}= \frac{x^4}{x^2+c_0^2y^2}>0,$$
    for all $(x,y) \in \Theta_1$. Therefore, by \eqref{formgammaF}, we get that $\Gamma_1$ is characterized by the implicit equation $\overline{F}_1(u,v)=0$, $(u,v) \in G(\Theta_1)$, where 
    \begin{align*}
        \overline{F}_1(u,v) &= 2\h(v)\left( u + \frac{c_0^2v^2u}{u-c_0^2v^2} \right)^{\frac{3}{2}} - \left( u + \frac{2c_0^2v^2u}{u-c_0^2v^2} \right)\sqrt{1-\frac{v^2u}{u-c_0^2v^2}} \\
        &= \frac{1}{\left( u-c_0^2v^2 \right)^\frac{3}{2}}\left( 2\h(v)u^3-(u^2+c_0^2v^2u)\sqrt{u(1-v^2)-c_0^2v^2} \right) \\
        &= \frac{u}{\left( u-c_0^2v^2 \right)^\frac{3}{2}}\left( 2\h(v)u^2-(u+c_0^2v^2)\sqrt{u(1-v^2)-c_0^2v^2} \right).
    \end{align*}
    To simplify, we can also describe $\Gamma_1$ as the implicit equation $F_2(u,v) = 0$, $(u,v) \in G(\Theta_1)$, where $F_2$ is the $C^1$ function given by
    $$F_2(u,v) := 2\h(v)u^2-(u+c_0^2v^2)\sqrt{u(1-v^2)-c_0^2v^2}.$$
    
    Let us prove that $0$ is a regular value of $F_2$. Note that
    \begin{align}
        \frac{\partial F_2}{\partial u}(u,v) &= \frac{c_0^2v^2(v^2+1) + 3u(v^2-1) + 8\h(v)u\sqrt{u(1-v^2)-c_0^2v^2}}{2\sqrt{u(1-v^2)-c_0^2v^2}}, \nonumber \\
        \frac{\partial F_2}{\partial v}(u,v) &= \frac{v\left( u^2 + 3c_0^4v^2 + c_0^2u(3v^2-1) \right)}{\sqrt{u(1-v^2)-c_0^2v^2}} + 2u^2\h'(v). \label{equ2}
    \end{align}
    
    Let $(u,v) \in G(\Theta_1)$ be a point of $\Gamma_1$ such that $\frac{\partial F_2}{\partial u}(u,v) = 0$. Then, a computation shows that
    \begin{equation}\label{equ1}
        u = \frac{c_0^2v^2\left(3v^2-1+\sqrt{\left( 3v^2 - \frac{11}{3} \right)^2 + \frac{32}{9}}\right)}{2(1-v^2)}.
    \end{equation}
    Observe that the equilibrium $\left(\frac{1}{4\mathfrak{h}(0)^2},0\right)$, the only point of the $v=0$ axis contained in $\Gamma_1$, does not satisfy the above equation, so we can assume that $v \neq 0$.
    
    By replacing \eqref{equ1} in \eqref{equ2}, a straightforward computation shows that $\frac{\partial F_2}{\partial v}(u,v) > 0$, if $v > 0$, and $\frac{\partial F_2}{\partial v}(u,v) < 0$, if $v < 0$, since $\h'(v)\geq 0$, for $v > 0$, and $\h'(v)\leq 0$, for $v < 0$, by hypothesis. That means that there does not exist a critical point of $F_2$ contained in $\Gamma_1$, which proves that $0$ is a regular value of $F_2$, that is, the curve $\Gamma_1 = F_2^{-1}(0)$ is regular.
     
    Now, let us prove that $\Gamma_1$ is connected. Let $\Gamma_1^1$ be the connected component of $\Gamma_1$ that passes through the equilibrium $e_0$. Since $\Gamma_1$ is contained in the compact $D$ region and $e_0$ is the only point in the $y=0$ axis that belongs to $\Gamma_1$, we get that $\Gamma_1^1$ has $\pm p_1$ as endpoints. This means that $\Gamma_1$ divides the phase space $\Theta_1$ into two regions $\Lambda_0$ and $\Lambda_\infty$, where the second one is unbounded.

    Suppose that there exists a different connected component $\Gamma_1^2$ of $\Gamma_1$, such that $\Gamma^2_1$ is not contained in the inner domain bounded by some other possible connected component. Then, observe that $\Gamma_1^2$ must be a closed curve contained in $\Lambda_0 \cap \{ y \neq 0 \}$ or $\Lambda_\infty \cap \{ y \neq 0 \}$. Since $\Gamma_1$ is symmetric with respect to the $y=0$ axis, it is sufficient to analyse the case $y>0$. First, consider $\Gamma_1^2 \subset \Lambda_0 \cap \{ y>0 \}$. 
    
    Observe that, for points $(x,0) \in \Theta_1$ in the $y=0$ axis and $\Lambda_0$ region, that is, points such that $x < \frac{1}{2\mathfrak{h}(0)}$, it follows from \eqref{formgammaF} that 
    $$F_1(x,0) = 2\h(0)x^3 - x^2 = x^2\left( 2\h(0)x - 1 \right) < 0.$$
    This means, by regularity of $\Gamma_1 = F_1^{-1}(0)$, that we can take a point $p_0=(x_0,y_0) \in \Theta_1$, $y_0 > 0$, contained in the interior region determined by $\Gamma_1^2$ such that $F_1(x_0,y_0) > 0$, that is, 
    \begin{equation}\label{equu3}
        \h\left( \frac{x_0y_0}{\sqrt{x_0^2 + c_0^2y_0^2}} \right) > \frac{(x_0^2+2c_0^2y_0^2)\sqrt{1-y_0^2}}{2(x_0^2+c_0^2y_0^2)^{\frac{3}{2}}}.
    \end{equation}
    Note that we can also take a point $(x_1,y_0) \in \Theta_1$ with same height of $(x_0,y_0)$ contained in the $\Lambda_0$ region, with $x_1 > x_0$, such that $F_1(x_1,y_0) < 0$. See Figure \ref{esquemaconexo}.

    \begin{figure}
        \centering
        \includegraphics[scale=7.5]{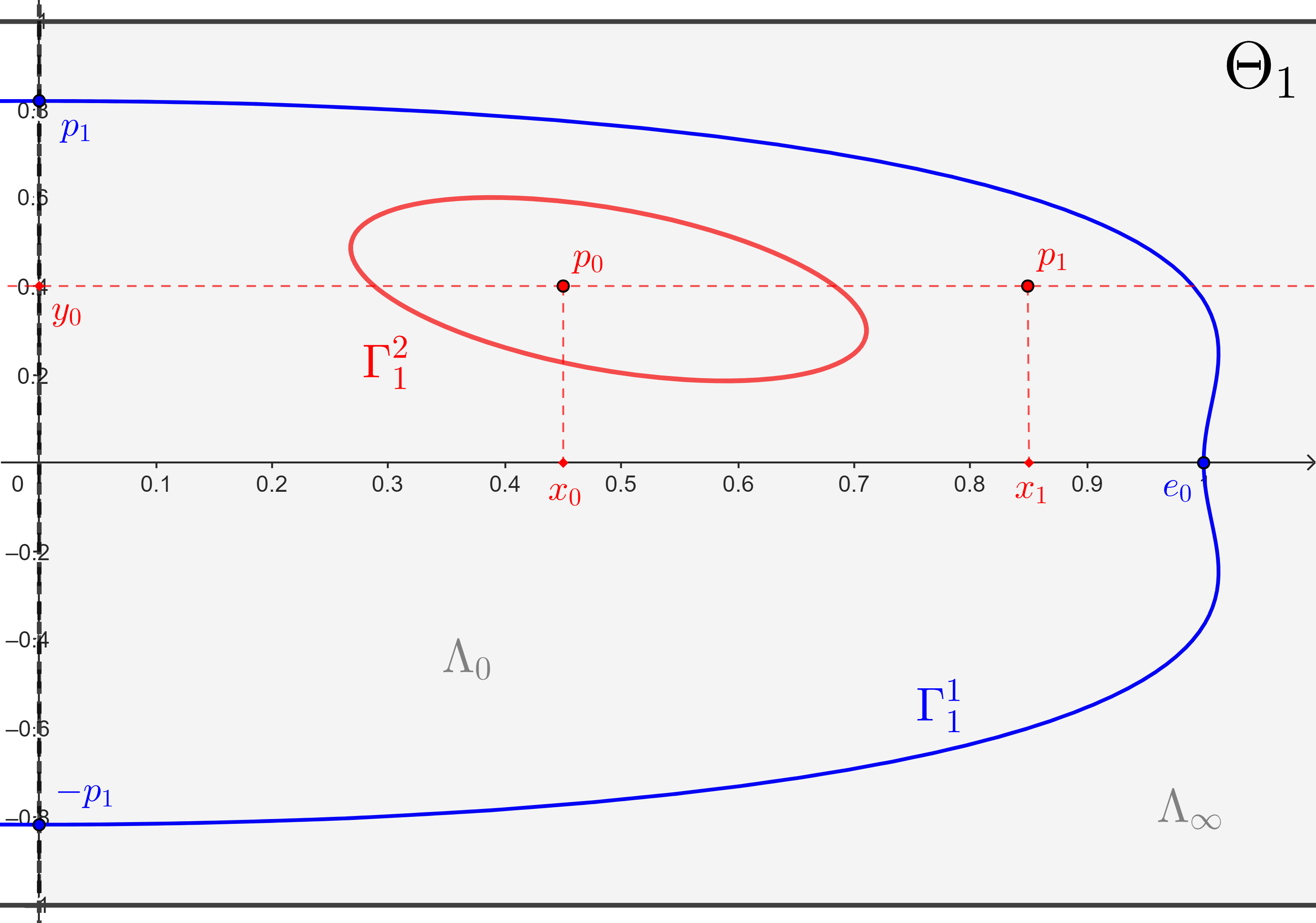}
            \caption{A sketch of the connected components of $\Gamma_1$.}
        \label{esquemaconexo}
    \end{figure}

    Define $\h_0$ and $g_0$ by
    $$\h_0(x) := \h\left( \frac{y_0x}{\sqrt{x^2 + c_0^2y_0^2}} \right) \quad \text{and} \quad g_0(x) := \frac{(x^2+2c_0^2y_0^2)\sqrt{1-y_0^2}}{2(x^2+c_0^2y_0^2)^{\frac{3}{2}}}, \quad x > 0.$$
    Since $\frac{y_0x}{\sqrt{x^2 + c_0^2y_0^2}}$ is increasing for $x>0$ and, by hypothesis, the function $\h(t)$ is increasing for $t>0$, we get that $\h_0$ is an increasing function. Moreover, a computation shows that $g_0$ is strictly decreasing. Hence,
    $$\h_0(x_0) \leq \h_0(x_1) \quad \text{and} \quad g_0(x_0) > g_0(x_1).$$
    From \eqref{equu3} and using that $F_1(x_1,y_0) < 0$, we get that
    $$\h_0(x_0) > g_0(x_0) \quad \text{and} \quad \h_0(x_1) < g_0(x_1).$$
    Consequently,
    $$\h_0(x_0) \leq \h_0(x_1) < g_0(x_1) < g_0(x_0) < \h_0(x_0),$$
    which is contradiction. If $\Gamma_1^2 \subset \Lambda_\infty \cap \{ y>0 \}$, it is possible to get a similar contradiction, taking a point $(x_1,y_0) \in \Lambda_\infty$ such that $x_1 < x_0$.
    
    Therefore, $\Gamma_1^1$ is the only connected component of $\Gamma_1$, that is, $\Gamma_1$ is connected.
\end{proof}

\begin{theorem}\label{teoprincipal}
    Let $\h \in C^1([-1,1])$ be given by \eqref{relationHh} in terms of $\mathcal{H}$, and suppose that $\h$ is positive, even ($\h(t) = \h(-t) > 0$, for all $t \in [-1,1]$) and increasing for $t \in [0,1]$. Then, the following list exhausts, up to vertical translations, all existing helicoidal $\mathcal{H}$-surfaces in $\R^3$ around the $e_3$-axis: 
    \begin{enumerate}
        \item The right circular cylinder $C_\mathcal{H}:= \S^1\left( \frac{1}{2\mathfrak{h}(0)} \right) \times \R$;
        
        \item The orbit under a helicoidal motion of a curve $\alpha(s) = (x(s),0,z(s))$, with $x(s)$ periodic and $x(s_0 + kT) = 0$, for some $s_0, T \in \R$, and for all $k \in \mathbb{N}$, such that $z'(s) > 0$, for all $s \in \R$;
        
        \item The orbit under a helicoidal motion of a closed curve $\alpha(s) = (x(s),0,z(s))$, diffeomorphic to the circle $\S^1$ , such that $x(s) > 0$, for all $s \in \R$;
        
        \item A one-parameter family of helicoidal surfaces with a \textbf{nodoid structure}, that is, the orbit under a helicoidal motion of a curve $\alpha(s) = (x(s),0,z(s))$, such that $x(s) > 0$ is periodic and the curve $\alpha$ is not embedded;
        
        \item A one-parameter family of helicoidal surfaces with a \textbf{unduloid structure}, that is, the orbit under a helicoidal motion of a curve $\alpha(s) = (x(s),0,z(s))$, such that $x(s) > 0$ is periodic and $z'(s)>0$, for all $s \in \R$.
    \end{enumerate}
\end{theorem}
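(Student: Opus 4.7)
The strategy is to carry out a full phase-space analysis of the orbits of \eqref{mainsystem} in $\Theta_1$, using the description of the nullcline $\Gamma_1$ from Proposition \ref{propgamma1}, the reversible symmetry of the system, and the axis-crossing extension of Remark \ref{obsnegative}. Since $\mathfrak{h}>0$ forces $\varepsilon=1$, Proposition \ref{propgamma1} shows that $\Gamma_1$ is a regular, compact, connected arc from $-p_1$ to $p_1$ through $e_0=(1/(2\mathfrak{h}(0)),0)$; together with the segment of $\{x=0\}$ between $\pm p_1$ it bounds a compact region $\Lambda_0\subset\Theta_1$, whose complement $\Lambda_\infty$ is unbounded. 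The evenness of $\mathfrak{h}$ makes \eqref{mainsystem} reversible under $(x,y,s)\mapsto(x,-y,-s)$, so the phase portrait is symmetric about $\{y=0\}$. A sign analysis yields $\sign(y')=-\sign(F_1)$, so $y'>0$ on $\Lambda_0$, $y'<0$ on $\Lambda_\infty$, and $\sign(x')=\sign(y)$.

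Next I would analyze the equilibrium and the orbits enclosing it. A short calculation shows that the linearization of \eqref{mainsystem} at $e_0$ has zero trace (by the evenness of $\mathfrak{h}$) and positive determinant, yielding strictly imaginary eigenvalues; combined with the reversibility this makes $e_0$ a nonlinear center, so the equilibrium gives the vertical-line profile producing the right cylinder $C_\mathcal{H}$ of item (1). A Poincar\'e--Bendixson-type argument, together with the reversibility, shows that each orbit meeting $\{y=0\}$ at $(x_*,0)$ with $x_*\in(0,x_0)$ (where $x_0=1/(2\mathfrak{h}(0))$) is closed and encloses $e_0$, and as $x_*\to 0^+$ the orbit approaches the limiting trajectory whose limit points are $\pm p_1$. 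By Remark \ref{obsnegative} this limiting orbit extends across the rotation axis to a globally defined periodic solution meeting the $e_3$-axis at parameters $s_0+kT$ with $z'>0$ throughout: this is item (2), and its uniqueness is Proposition \ref{propsurfaceaxis}. Among the strictly enclosing orbits, those with $x_*$ close to $x_0$ remain in $\{|y|<1\}$, so $\varepsilon=+1$ and $z'>0$ throughout; hence $z$ is strictly increasing and the profile is embedded, giving the unduloidal family (item (5)). As $x_*$ decreases past a critical value, the orbit reaches $\{y=\pm 1\}$ and reflects; a direct computation with \eqref{meancurvaturehelicoidal} at a reflection point shows that $z$ has a nondegenerate local extremum there and $\varepsilon$ flips. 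Combining this flip with the oddness of $z'$ under the reversibility then shows $z(s)$ is periodic, and the resulting profile is a simple closed curve in the $(x,z)$-plane diffeomorphic to $\S^1$, yielding item (3).

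For orbits in $\Lambda_\infty$ that do not enclose $e_0$, the monotonicity $y'<0$ prevents them from closing up within $\Theta_1$, but the reflections at $\{y=\pm 1\}$ exchange them with the conjugate phase space $\Theta_{-1}$ (where the corresponding system has $y'>0$); a periodic orbit is then obtained by alternating between $\Theta_1$ and $\Theta_{-1}$ through consecutive reflections. At each reflection $\varepsilon$ flips and the local-extremum computation using \eqref{meancurvaturehelicoidal} again applies, so $x(s)$ is periodic; in contrast to item (3), however, the profile curve winds around and crosses itself in the $(x,z)$-plane, producing the nodoidal family of item (4). Continuous dependence of orbits on initial data yields a one-parameter family in each of items (3), (4), (5), completing the classification.

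The main obstacle is the treatment of the degenerate set $\{y=\pm 1\}$, where \eqref{mainsystem} is singular; one must rely on the full mean-curvature equation \eqref{meancurvaturehelicoidal} to extend solutions smoothly through the reflection points and to verify the sign flip of $\varepsilon$. A second difficulty is the bifurcation analysis at the critical orbit separating items (5) from (3), and at the critical parameter distinguishing embedded from non-embedded periodic profiles within items (3) and (4). A subsidiary technical point is ruling out the escape of orbits in $\Lambda_\infty$ to $x=+\infty$, which requires quantitative bounds on $F_1$ for large $x$ together with the arclength constraint $|y|\le 1$. This whole analysis is the helicoidal analog of the classical turning-point analysis behind the CMC unduloid/nodoid dichotomy.
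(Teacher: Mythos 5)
Your overall framework (the nullcline $\Gamma_1$ from Proposition \ref{propgamma1}, the reversibility giving symmetry about $\{y=0\}$, the center at $e_0$, the orbit through $p_1$ giving item (2), and the alternation between $\Theta_1$ and $\Theta_{-1}$ across $\{y=\pm 1\}$ for the nodoids) matches the paper's proof. But there is a genuine error in how you produce item (3). You first assert (correctly, and in agreement with the paper) that every orbit crossing $\{y=0\}$ at $x_*\in(0,1/(2\mathfrak{h}(0)))$ is a closed curve enclosing $e_0$; these are exactly the orbits in the region $\mathcal{W}_0$ bounded by the axis-orbit $\gamma_0$, and since $\gamma_0$ joins $p_1$ to $-p_1$ with $|y|\le 1/\sqrt{1+c_0^2\mathfrak{h}(0)^2}<1$, \emph{every} such orbit stays uniformly away from $\{y=\pm 1\}$. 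Hence your subsequent claim that "as $x_*$ decreases past a critical value, the orbit reaches $\{y=\pm 1\}$ and reflects" is both internally inconsistent with your closedness claim and false in the actual phase portrait: the whole family in $\mathcal{W}_0$ consists of unduloid-type profiles (item (5)) and no bifurcation to item (3) occurs there.

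In the paper, item (3) arises from the same gluing construction as item (4), not from orbits enclosing $e_0$: an orbit in $\mathcal{W}_\infty\subset\Theta_1$ is a horizontal graph with both limit endpoints at $(x_0,\pm 1)$, an orbit in $\Theta_{-1}$ likewise has both endpoints at $(x_0,\pm 1)$, and the two resulting bounded $\mathcal{H}$-surfaces are glued along boundary helices where their unit normals coincide (via uniqueness of the Cauchy problem). When \emph{both} pairs of boundary helices match one obtains a closed profile curve diffeomorphic to $\S^1$ (item (3)); when only one pair matches, iteration yields the non-embedded nodoidal family (item (4)). You need to rework your case analysis so that items (3) and (4) both come from the $\mathcal{W}_\infty$/$\Theta_{-1}$ gluing, and delete the purported critical value inside the family of orbits around $e_0$. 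Two further points you flag but do not resolve should also be supplied: the proof that orbits in $\Theta_{-1}$ (and the orbit $\gamma_0$ in $\Lambda_2$) cannot run off to $x=+\infty$ before reaching $\{y=\pm1\}$ (the paper does this geometrically, by the maximum principle against a sphere of radius $1/(2H_0)$ over an exterior planar domain, using $\mathcal{H}\ge H_0>0$), and the justification that the gluing across $\{y=\pm1\}$ is smooth (the paper invokes uniqueness of the Cauchy problem with matching unit normals along the boundary helices, rather than a direct computation with \eqref{meancurvaturehelicoidal}).
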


\begin{proof}
    Let $(x(s),y(s))$ be any solution to \eqref{mainsystem}. Taking into account that $(x(-s),-y(-s))$ is also a solution to the system, we obtain that any orbit of the phase space $\Theta_\varepsilon$ is symmetric with respect to the $y=0$ axis. By Proposition \ref{propgamma1}, the curve $\Gamma_1$ in $\Theta_1$ together with $y=0$ divides the phase space $\Theta_1$ into four monotonicity regions $\Lambda_1,\dots,\Lambda_4$, all of them meeting at the equilibrium $e_0$. Moreover, $\Theta_{-1}$ has only two monotonicity regions $\Lambda^+=\Theta_{-1}\cap\{y>0\}$ and $\Lambda^-=\Theta_{-1}\cap\{y<0\}$, because there does not exist $\Gamma_{-1}$ in $\Theta_{-1}$. See Figure \ref{phasespacepositive}.

    \begin{figure}[h]
        \centering
        \includegraphics[scale=0.12]{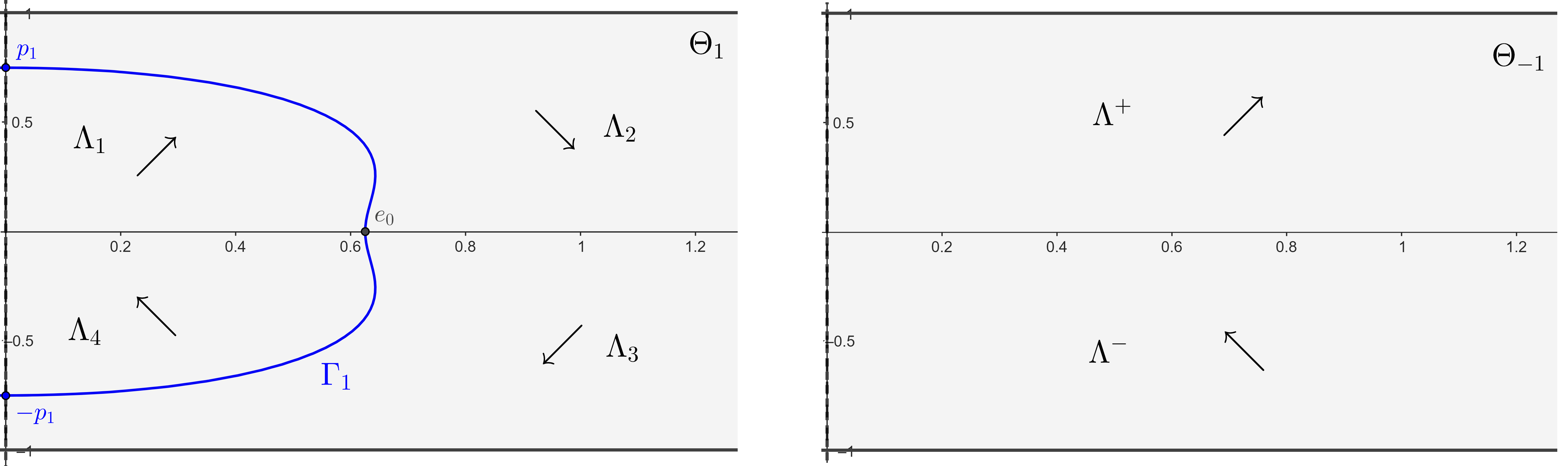}
            \caption{The phase spaces $\Theta_1$ and $\Theta_{-1}$ for $\h(t)$ positive, even and increasing for $t \in [0,1]$.}
        \label{phasespacepositive}
    \end{figure}
    
    By monotonicity properties, any orbit in $\Theta_{-1}$ is given by a horizontal $C^1$ graph $x=g(y)$, with $g(y)=g(-y)>0$ for all $y\in (-\delta,\delta)$, for some $\delta \in (0,1]$, and such that $g$ restricted to $[0,\delta)$ is strictly increasing. 
    
    In case that $g(y) \to \infty$ as $y \to \delta$ for some orbit, let $\alpha(s) = (x(s),0,z(s))$ be its associated profile curve and $r_0 = g(0)$. Consider $\Psi(s,\theta)$ a parametrization for the corresponding helicoidal $\mathcal{H}$-surface $\Sigma$, as in \eqref{parametrization}. If we restrict $\theta \in (-\frac{\pi}{2},\frac{\pi}{2})$, the resulting surface $\Sigma$ would be a bi-graph over the set
    $$\Omega = \R^2 \smallsetminus B_{r_0}(0) \cap \{ x>0 \}.$$
    
    This is impossible, since $\mathcal{H} \geq H_0 > 0$ and it is well know, by the maximum principle comparing with the sphere $\mathbb{S}^2(\frac{1}{2H_0})$, that there do not exist graphs in $\R^3$ over $\Omega$ with mean curvature bounded from below by a positive constant. Therefore, $\delta = 1$ and any orbit in $\Theta_{-1}$ has two limit endpoints of the form $(x_0,\pm 1)$ for some $x_0 > 0$. The resulting $\mathcal{H}$-surface $\Sigma_{-1}$ in $\R^3$ associated to any such orbit is then a surface with boundary parametrized by the curves 
    $$\gamma_1(t)=(x_0\cos t, x_0\sin t, a + c_0 t) \quad \text{and} \quad \gamma_2(t)=(x_0\cos t, x_0\sin t, b + c_0 t), \quad t \in \R,$$
    for some $a<b$. The $z(s)$-coordinate of the profile curve $\alpha(s)$ of $\Sigma_{-1}$ is strictly decreasing and the unit normal to $\Sigma_{-1}$ along $\gamma_1$ and $\gamma_2$ are, respectively,
    $$\eta_1(t) = \frac{(c_0\sin t,-c_0\cos t,x_0)}{\sqrt{c_0^2+x_0^2}} \quad \text{and} \quad \eta_2(t) = \frac{(c_0\sin t,-c_0\cos t,-x_0)}{\sqrt{c_0^2+x_0^2}}.$$
    
    \begin{figure}
        \centering
        \includegraphics[scale=0.5]{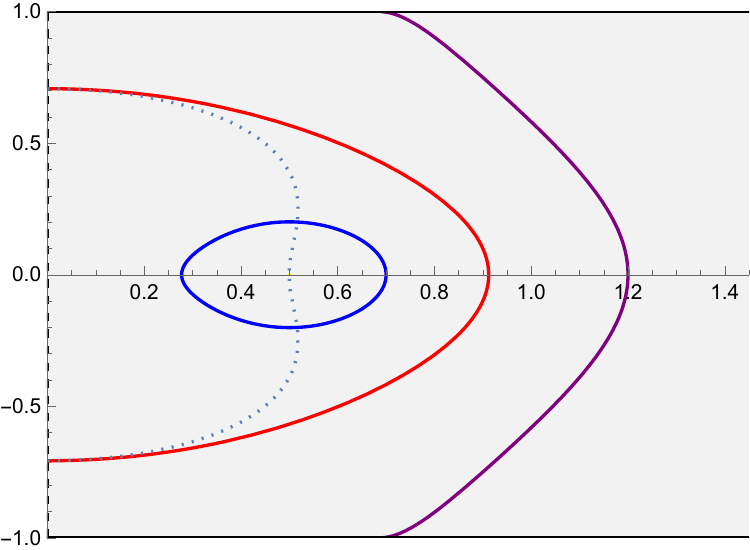}
        \hspace{0.15cm}
        \includegraphics[scale=0.5]{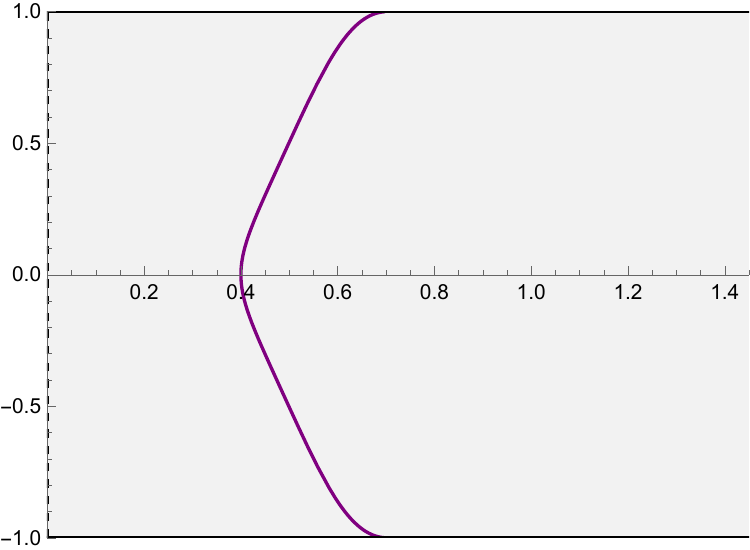}
        \caption{Phase spaces for the choice $c_0=1$ and $\h(y)=y^2+1$, $\Theta_1$ on the left and $\Theta_{-1}$ on the right. The dotted curve corresponds to $\Gamma_1$ and the red curve to the orbit $\gamma_0$. The blue curve is the orbit corresponding to the $\mathcal{H}$-surface with unduloid structure. The purple curves correspond to the $\mathcal{H}$-surfaces with nodoid structure.}
        \label{orbitspositive}
    \end{figure}
    
    Now, we analyse the orbits in $\Theta_1$ and their associated $\mathcal{H}$-surfaces. First, observe that the equilibrium $e_0 = \left(\frac{1}{2\mathfrak{h}(0)},0\right) \in \Theta_1$ corresponds to the cylinder $C_\mathcal{H} = \mathbb{S}^1\left(\frac{1}{2\mathfrak{h}(0)}\right) \times \R$.
    
    The linearized system at $e_0$ associated to the nonlinear system \eqref{mainsystem} for $\varepsilon=1$ is
    \begin{equation}\label{positivelinearsystem}
         \left( \begin{array}{c} u \\ v \end{array}\right)' = \left( \begin{array}{cc}
             0 & 1 \\
             -\frac{4\mathfrak{h}(0)^2}{1+4c_0^2\mathfrak{h}(0)^2} & 0
         \end{array} \right) \left( \begin{array}{c} u \\ v \end{array}\right),
    \end{equation}
    whose orbits are ellipses around the origin. By classical theory of nonlinear autonomous systems, since the eigenvalues of the Jacobian matrix in \eqref{positivelinearsystem} are pure complex numbers $\pm \frac{2\mathfrak{h}(0)}{\sqrt{1+4\mathfrak{h}(0)^2}}i$, we get that the equilibrium $e_0$ has a center structure. This means that all orbits around $e_0$ are closed curves, since $\Theta_1$ is symmetric with respect to the $y=0$ axis. In particular, we deduce that all orbits of $\Theta_1$ stay at a positive distance from $e_0$.
    
    By Proposition \ref{propsurfaceaxis}, there exists an unique orbit $\gamma_0$ in the phase space $\Theta_1$ that passes through $p_1=\left(0,\frac{1}{\sqrt{1+c_0^2\mathfrak{h}(0)^2}}\right)$. By monotonicity properties, $\gamma_0$ lies near $p_1$ inside the monotonicity region $\Lambda_2$ and it cannot stay forever in that region, by a same previous argument using that $\mathcal{H} \geq H_0 > 0$. Moreover, $\gamma_0$ stays at a positive distance from $e_0$ and it does not intersect $\Gamma_1$. Thus, $\gamma_0$ must intersect the $y=0$ axis at some point $(x_0,0)$ with $x_0 > \frac{1}{2\mathfrak{h}(0)}$. 
    
    By symmetry of the phase space, we deduce that $\gamma_0$ can be extended to be an orbit in $\Theta_1$ that joins the limit points $p_1$ and $-p_1$, and that lies in the region $\Lambda_2 \cup \Lambda_3 \cup \{y=0\}$. Taking into account that we can extend this orbit to $x(s)<0$, using that the phase space is symmetric by Remark \ref{obsnegative}, the helicoidal $\mathcal{H}$-surface associated to the orbit $\gamma_0$ is given by the orbit of a curve $\alpha(s) = (x(s),0,z(s))$ under a helicoidal motion, with $x(s)$ periodic and $x(s_0 + kT) = 0$, for some $s_0, T \in \R$ and for all $k \in \mathbb{N}$, such that $z'(s) > 0$, for all $s \in \R$. See Figure \ref{positiveeixo}.
    
    \begin{figure}[h]
        \centering
        $\vcenter{\hbox{\includegraphics[scale=0.7]{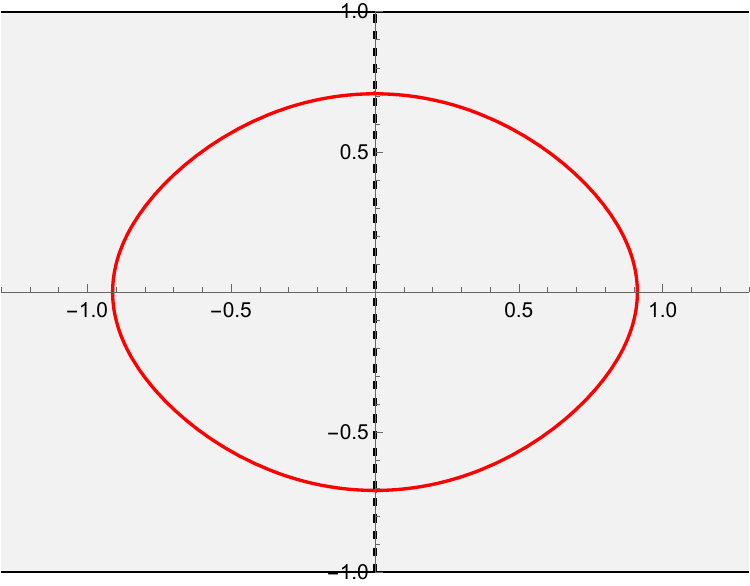}}}
        \hspace{1cm}
        \vcenter{\hbox{\includegraphics[scale=0.5]{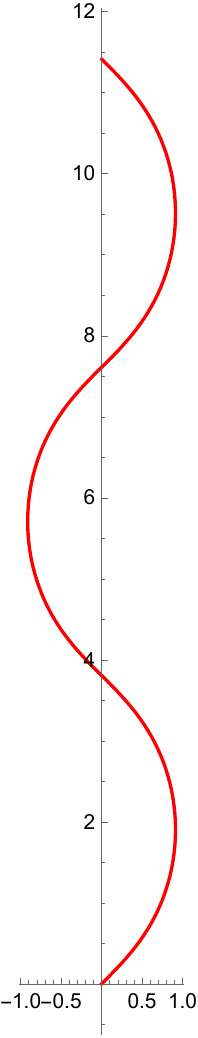}}}
        \hspace{1cm}
        \vcenter{\hbox{\includegraphics[scale=0.61]{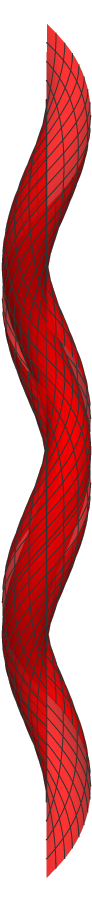}}}$
        \caption{The orbit $\gamma_0$ in $\Theta_1$, its associated profile curve and its resulting helicoidal $\mathcal{H}$-surface.}
        \label{positiveeixo}
    \end{figure}
    
    The orbit $\gamma_0$ divides $\Theta_1$ into two connected components. The first one containing the equilibrium $e_0$, which we will denote by $\mathcal{W}_0$ and the second one, where $x>0$ is unbounded, which we will denote by $\mathcal{W}_\infty$. By uniqueness of a system solution, any orbit of $\Theta_1$ other than $\gamma_0$ lies entirely in $\mathcal{W}_0$ or $\mathcal{W}_\infty$.
    
    We can observe, by symmetry and monotonicity, that any orbit in $\mathcal{W}_\infty$ is a symmetric horizontal graph $x=g(y)$ with $g(1)=g(-1)=x_0$ for some $x_0>0$, and such that $g$ is strictly increasing in $(-1,0)$ and strictly decreasing in $(0,1)$. The resulting $\mathcal{H}$-surface $\Sigma_1$ in $\R^3$ associated to any such orbit is then a surface with boundary parametrized by the curves 
    $$\gamma_1(t)=(x_0\cos t, x_0\sin t, c + c_0 t) \quad \text{and} \quad \gamma_2(t)=(x_0\cos t, x_0\sin t, d + c_0 t), \quad t \in \R,$$
    for some $c<d$. The $z(s)$-coordinate of the profile curve $\alpha(s)$ of $\Sigma_{1}$ is strictly increasing and the unit normal to $\Sigma_{1}$ along $\gamma_1$ and $\gamma_2$ are, respectively,
    $$\eta_1(t) = \frac{(c_0\sin t,-c_0\cos t,x_0)}{\sqrt{c_0^2+x_0^2}} \quad \text{and} \quad \eta_2(t) = \frac{(c_0\sin t,-c_0\cos t,-x_0)}{\sqrt{c_0^2+x_0^2}}.$$
    
    Consequently, by uniqueness of the solution to the Cauchy problem for helicoidal $\mathcal{H}$-surfaces, we can deduce that, given $x_0>0$, the $\mathcal{H}$-surfaces $\Sigma_{-1}$ and $\Sigma_1$ constructed associated to $x_0$ can be smoothly glued together along any of boundary components where their unit normal coincide, to form a larger $\mathcal{H}$-surface. For this, we can assume without loss of generality in the previous construction that $a=c$ or $b=d$, and hence $\Sigma_1$ and $\Sigma_{-1}$ have the same Cauchy data.
    
    If we have simultaneously $a=c$ and $b=d$, the profile curve of the $\mathcal{H}$-surface obtained is diffeomorphic to the circle $\mathbb{S}^1$. Thus, the resulting $\mathcal{H}$-surface $\Sigma$ will be a tube diffeomorphic to the cylinder $\S^1\times \R$. Note that $\Sigma$ could be properly embedded and it is important to observe that, if $\mathcal{H} \equiv H_0 > 0$, this is impossible by Theorem 2.10 of \cite{SOLOMON}.
    
    If we have $a=c$ and $b \neq d$, or $a \neq c$ and $ b = d$, by iterating the previous process we obtain a non-embedded helicoidal $\mathcal{H}$-surface, with a nodoid structure, invariant by a vertical translation. See Figure \ref{positiveneg}.
    
    \begin{figure}
        \centering
        $\vcenter{\hbox{\includegraphics[scale=0.6]{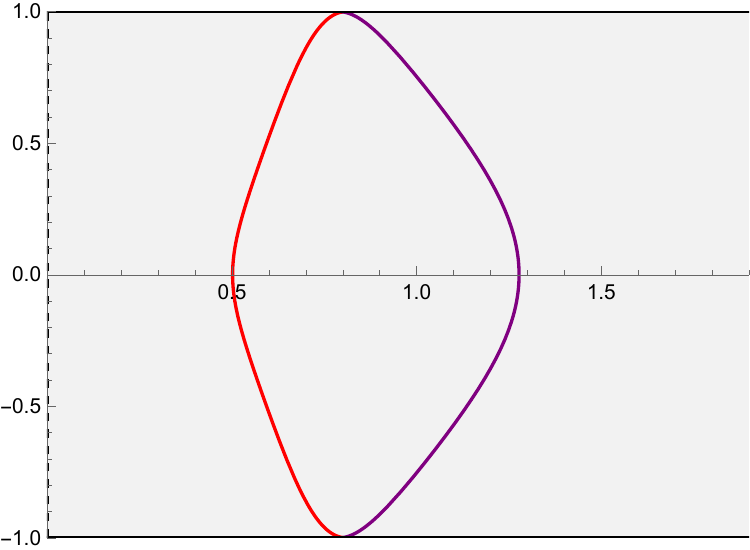}}}
        \hspace{0.8cm}
        \vcenter{\hbox{\includegraphics[scale=0.4]{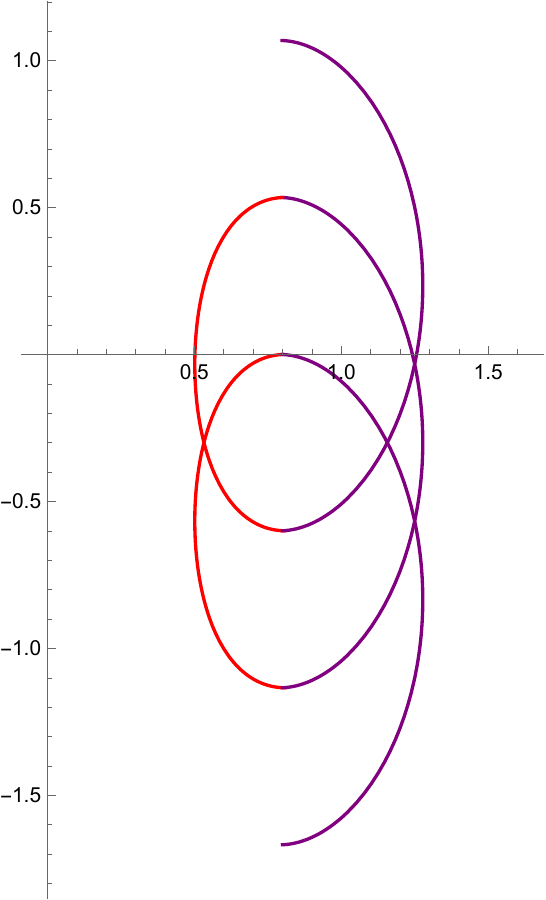}}}
        \hspace{0.8cm}
        \vcenter{\hbox{\includegraphics[scale=0.45]{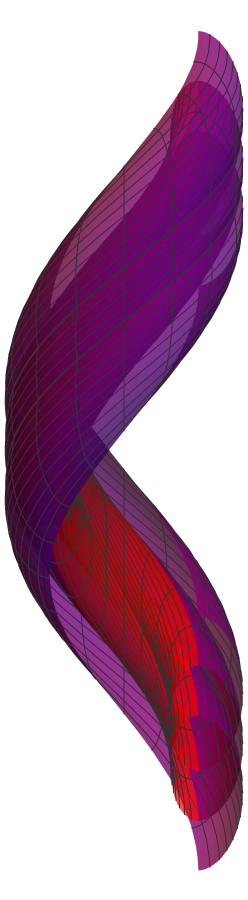}}}$
        \caption{On the left, the red curve corresponds to an orbit in the $\Theta_{-1}$ phase space and the purple curve to an orbit in $\Theta_1$ that lies in $\mathcal{W_\infty}$. On the middle, their associated profile curves. On the right, the red surfaces correspond to the resulting helicoidal $\mathcal{H}$-surfaces $\Sigma_{-1}$ and the purple surfaces to $\Sigma_1$, composing a helicoidal $\mathcal{H}$-surface with nodoid structure.}
        \label{positiveneg}
    \end{figure}
    
    Now, we consider an orbit $\gamma$ of $\Theta_1$ that is contained in the region $\mathcal{W}_0$. Since $\gamma$ stays at a positive distance from $e_0$ and cannot approach a point of the form $(0,y)$, with $y \neq \pm \frac{1}{\sqrt{1+c_0^2\mathfrak{h}(0)^2}}$, we deduce, taking into account the monotonicity properties and the symmetry with respect to the $y=0$ axis, that $\gamma$ is a closed curve containing $e_0$ in its inner region. That means that the profile curve $\alpha(s) = (x(s),0,z(s))$ of the helicoidal $\mathcal{H}$-surface associated to any such orbit has an unduloid structure, that is, satisfies that $s$ is defined for all real values, such that $z'(s)>0$ for all $s\in \R$ and $x(s)$ is periodic. See Figure \ref{positivepos}.
    \begin{figure}
        \centering
        $\vcenter{\hbox{\includegraphics[scale=0.72]{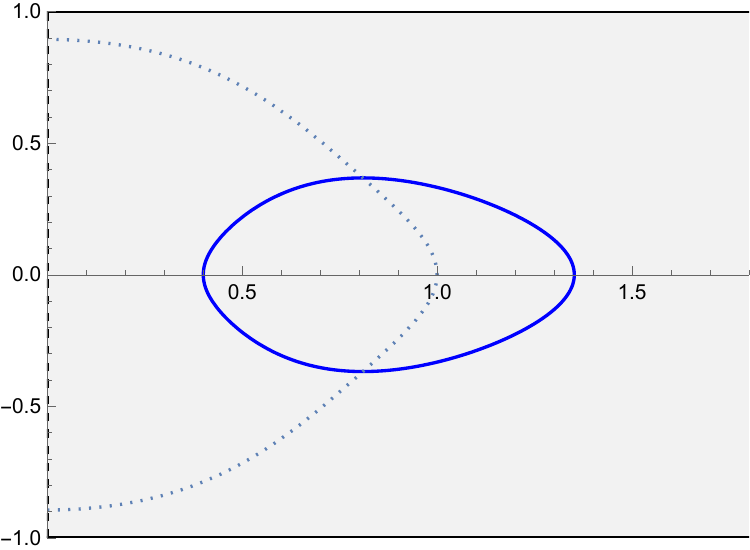}}}
        \hspace{1.2cm}
        \vcenter{\hbox{\includegraphics[scale=0.13]{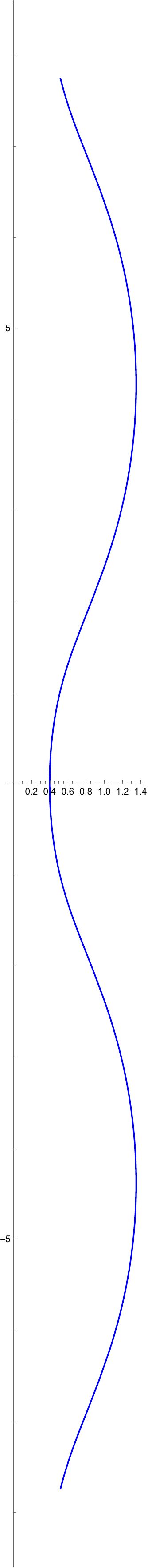}}}
        \hspace{1.2cm}
        \vcenter{\hbox{\includegraphics[scale=0.55]{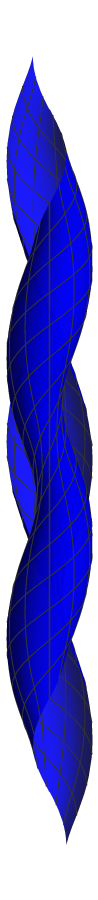}}}$
        \caption{An orbit that lies in $\mathcal{W}_0$ in the $\Theta_1$ phase space, its associated profile curve and its resulting helicoidal $\mathcal{H}$-surface, which has an unduloid structure.}
        \label{positivepos}
    \end{figure}
\end{proof}

\begin{remark}
    It is important to observe that the fact that $\h$ is increasing in $[0,1]$ is only used to know that $\Gamma_1$ is regular and connected.  If it were established that this behavior of $\Gamma_1$ persists even when the function $\h$ does not exhibit monotonicity, the above theorem would remain valid.

    For instance, the Figure \ref{nullcline_exe} illustrates the behavior of $\Gamma_1$ for the specific choice $\h(t) = \cos(40t) + 2$, an even and positive $C^1$ function that is not increasing in $[0,1]$. Observe that $\Gamma_1$ is not connected in this case.
    \begin{figure}
        \centering
        \includegraphics[scale=0.5]{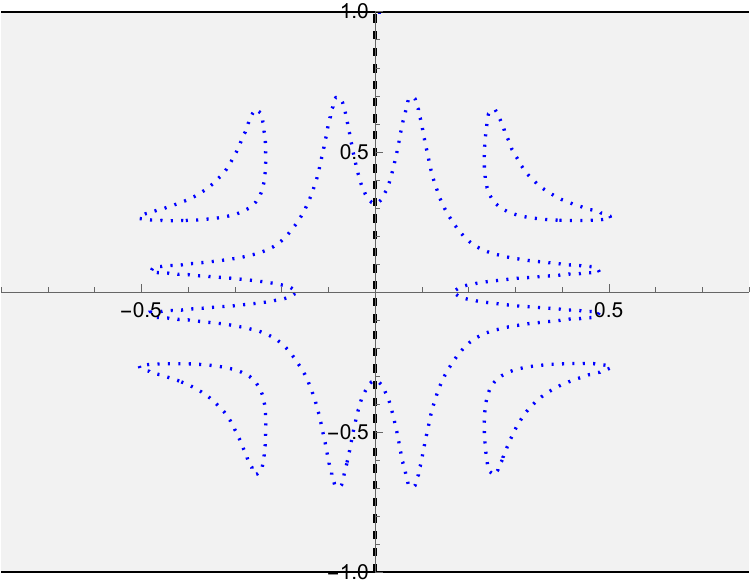}
        \caption{The curve $\Gamma_1$ for the choice $\h(t) = \cos(40t) + 2$.} \label{nullcline_exe}
    \end{figure}
    \qed
\end{remark}


\section{Another examples of helicoidal \texorpdfstring{$\mathcal{H}$}{H}-surfaces}\label{sectionexamples}

In this section, we will exhibit some examples of helicoidal $\mathcal{H}$-surfaces which there exists some $t_0 \in (-1,1)$ ($t_0 = \pm 1$ is not included by Remark \ref{remarkang1}) such that $\h(t_0) = 0$ and $\h(t) \neq 0$, for all $t \neq t_0$, where $\h \in C^1([-1,1])$ is given by \eqref{relationHh} in terms of $\mathcal{H}$.

By \eqref{anglefunction}, the angle function of $\Sigma$ is constant along the curves
$$\beta_0 := \left\{ (x,y) \in \Theta_\varepsilon : \frac{xy}{\sqrt{x^2+c_0^2y^2}}=t_0\right\}$$
in each $\Theta_\varepsilon$ phase space. It is important to observe that, in the $x=0$ axis, the angle function vanishes, in particular, at the points $\pm p_\varepsilon$, when they exist. Since $x>0$ in $\Theta_\varepsilon$, note that we can describe $\beta_0$ as a graph $y=f(x)$, where
$$f(x) = \frac{t_0x}{\sqrt{x^2-c_0^2t_0^2}}.$$

By letting $x\to \infty$ in $f(x)$, we get that $\beta_0$ converges to the straight line $y=t_0$. Indeed,
$$\lim_{x\to\infty}f(x) = \lim_{x\to\infty}\frac{t_0x}{\sqrt{x^2-c_0^2t_0^2}} = \lim_{x\to\infty}\frac{t_0}{\sqrt{1-\frac{t_0^2c_0^2}{x^2}}}=t_0.$$

Along $\beta_0$, letting $y\to\pm 1$, we would have $\frac{x}{\sqrt{c_0^2+x(s)^2}}\to \pm t_0$, that is, $x\to\frac{\pm |c_0|t_0}{\sqrt{1-t_0^2}}$. Set
    $$p_0^+:=\left( \frac{|c_0|t_0}{\sqrt{1-t_0^2}}, 1 \right), \text{ if } t_0\geq 0, \text{ and  } p_0^-:=\left( \frac{-|c_0|t_0}{\sqrt{1-t_0^2}}, -1 \right), \text{ if } t_0 \leq 0. \text{ (See Fig. \ref{beta0fig}).}$$
    
\begin{figure}[h]
    \centering
    $\vcenter{\hbox{\includegraphics[scale=2.1]{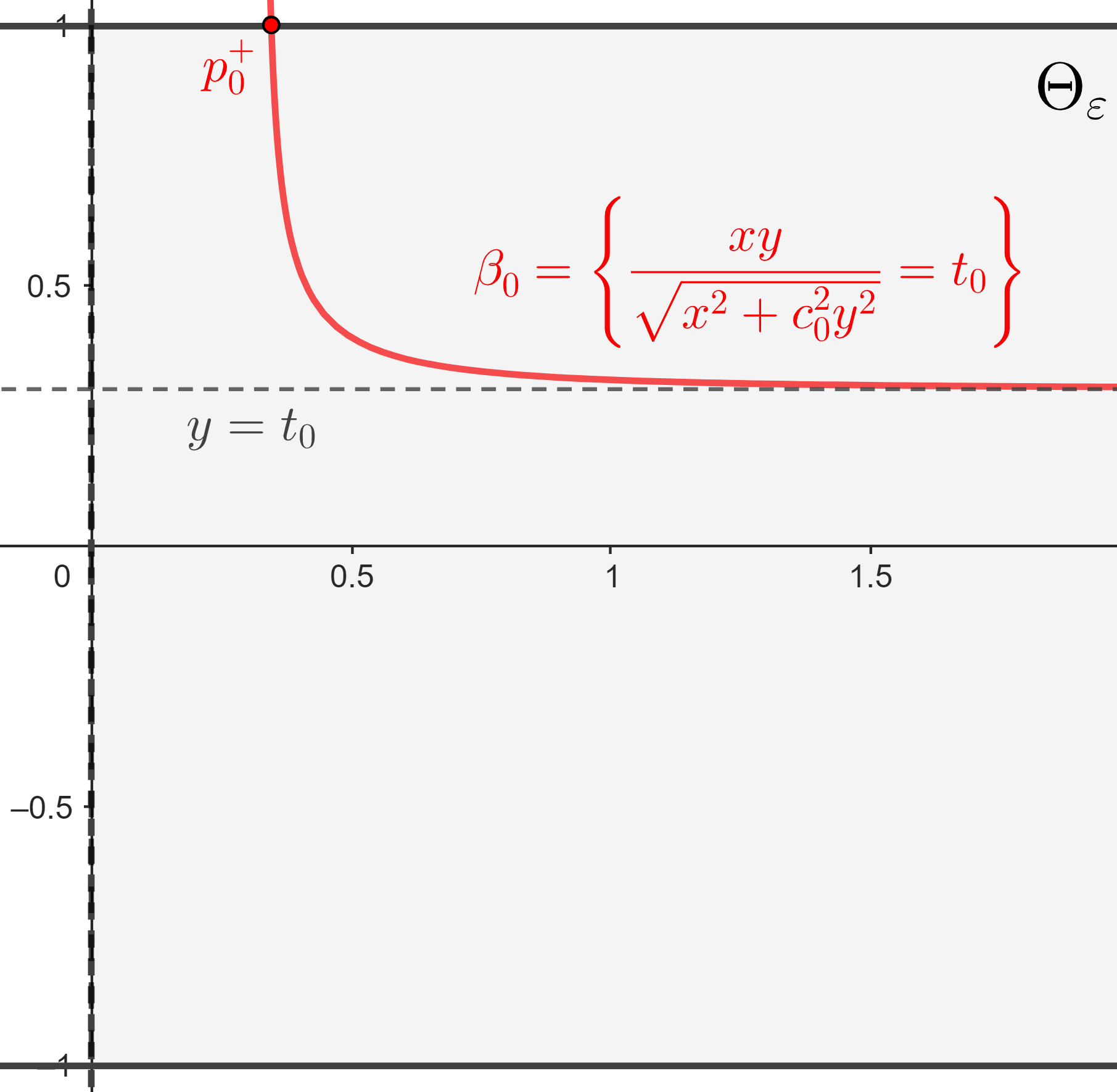}}}
    \hspace{0.7cm}
    \vcenter{\hbox{\includegraphics[scale=2.1]{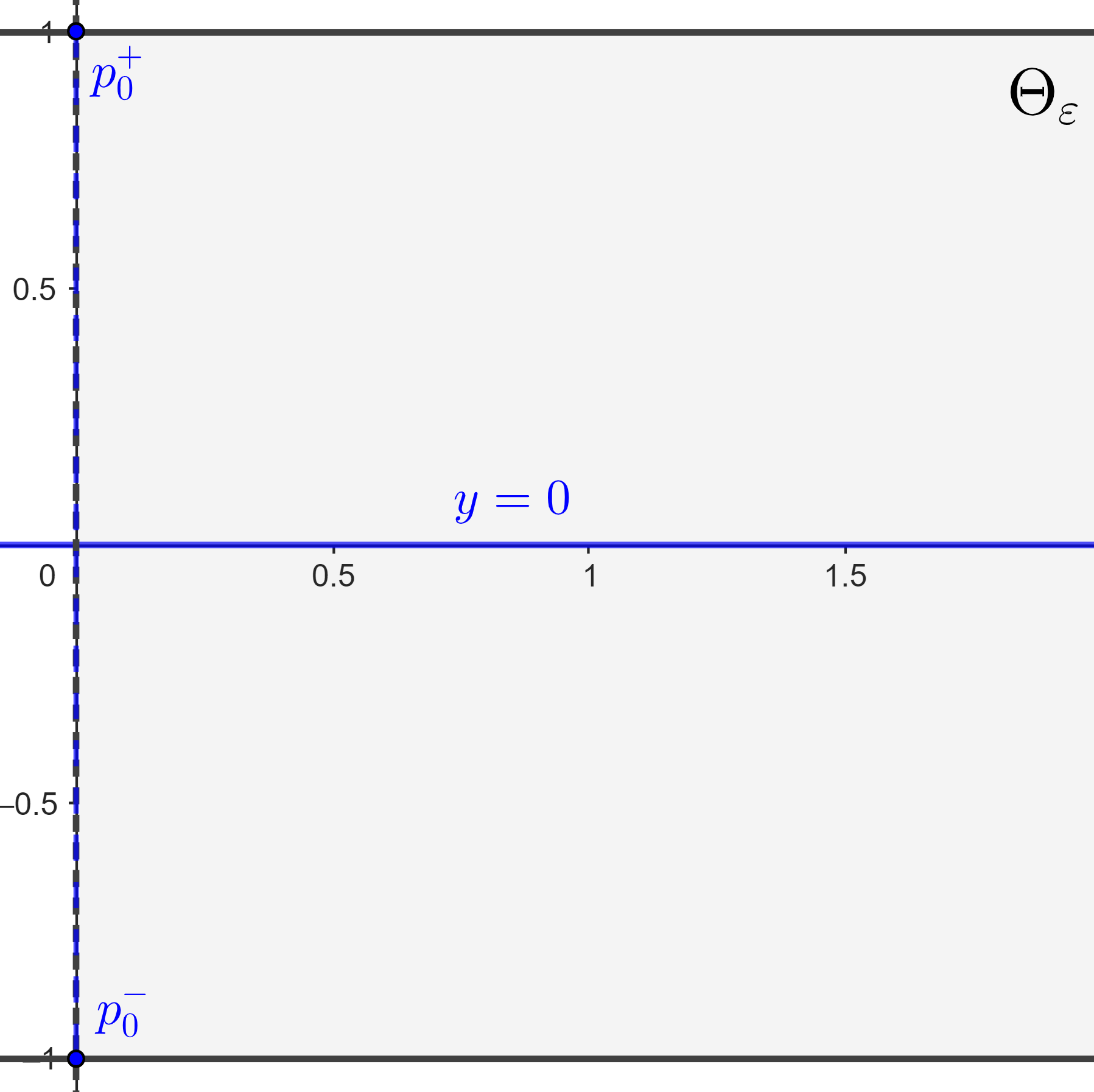}}}
    \hspace{0.7cm}
    \vcenter{\hbox{\includegraphics[scale=2.1]{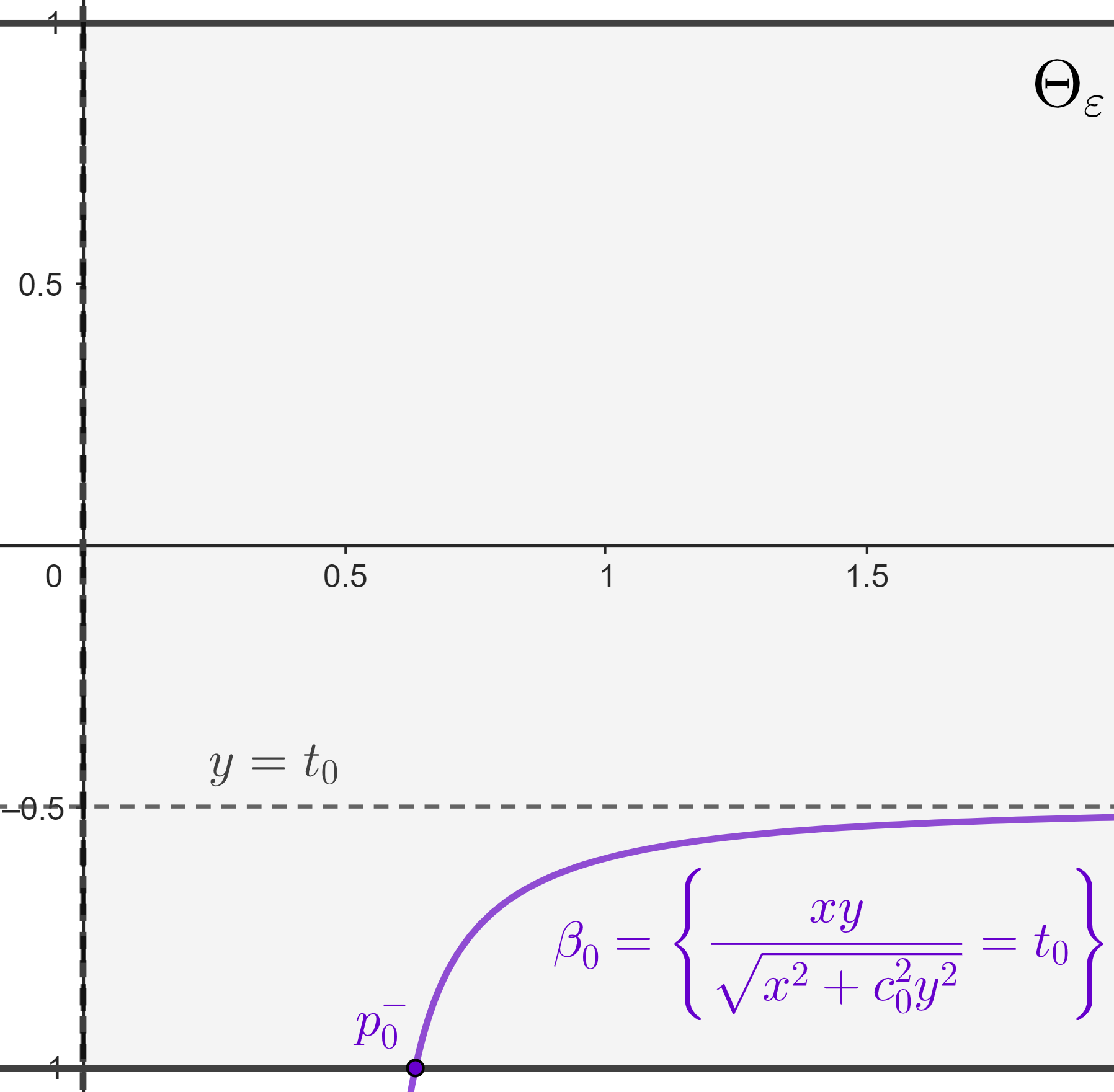}}}$
    \caption{A sketch for the curve $\beta_0$ for each $t_0 > 0$, $t_0 = 0$ and $t_0<0$.}
    \label{beta0fig}
\end{figure}

It is important to observe that, if $\Gamma_\varepsilon$ exists, it must not intersect the curve $\beta_0$. Indeed, if $(x,y) \in \beta_0$, we would get that $\h \left(\frac{xy}{\sqrt{x^2+c_0^2y^2}}\right)=0$ in \eqref{formgamma}, which is impossible. Moreover, letting $y \to \pm 1$ in \eqref{formgamma}, observe that $\Gamma_\varepsilon$ converges to $p_0^+$ or $p_0^-$.

\subsection{Case \texorpdfstring{$\h(0)=0$}{h(0)=0}}

Suppose $t_0=0$. In this case, we get that $\pm p_\varepsilon=(0,\pm 1)$ and there is not a equilibrium in both $\Theta_\varepsilon$ phase spaces. Moreover, if $\Gamma_\varepsilon$ exists, it must intersect $(0,\pm 1)$. Letting $y \to 0$ in \eqref{formgamma}, we obtain that $\Gamma_\varepsilon$ converges at infinity to the $y=0$ axis.

First, consider $\h \in C^1([-1,1])$ such that $\h(0) = 0$ and $\h(t) > 0$, for all $t \neq 0$. This means that $\Gamma_{-1}$ does not exist and $\Gamma_1$ have two connected components, each one contained in the regions $\{y>0\}$ and $\{y<0\}$. Consider the example $\h(t) = t^2$.

The curve $\Gamma_1$ in $\Theta_1$ together with $y=0$ divides the phase space $\Theta_1$ into four monotonicity regions $\Lambda_1, \dots, \Lambda_4$, where $\Lambda_1$ and $\Lambda_2$ are contained in the $\{y>0\}$ region and $\Lambda_3$ and $\Lambda_4$ are contained in $\{y<0\}$, such that $\Lambda_1$ and $\Lambda_4$  meet at the $y=0$ axis. The curve $\Gamma_{-1}$ in $\Theta_{-1}$ does not exist, so $\Theta_{-1}$ has only two monotonicity regions $\Lambda_+ := \Theta_{-1} \cap \{ y>0 \}$ and $\Lambda_- := \Theta_{-1} \cap \{ y<0 \}$. See Figure \ref{Phasespace_parabola}.

\begin{figure}[h]
    \centering
    $\vcenter{\hbox{\includegraphics[scale=0.55]{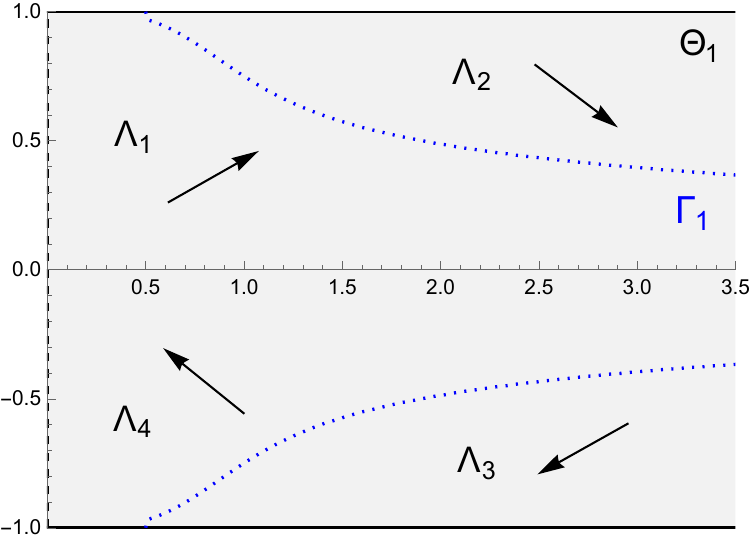}}}
    \hspace{0.7cm}
    \vcenter{\hbox{\includegraphics[scale=0.55]{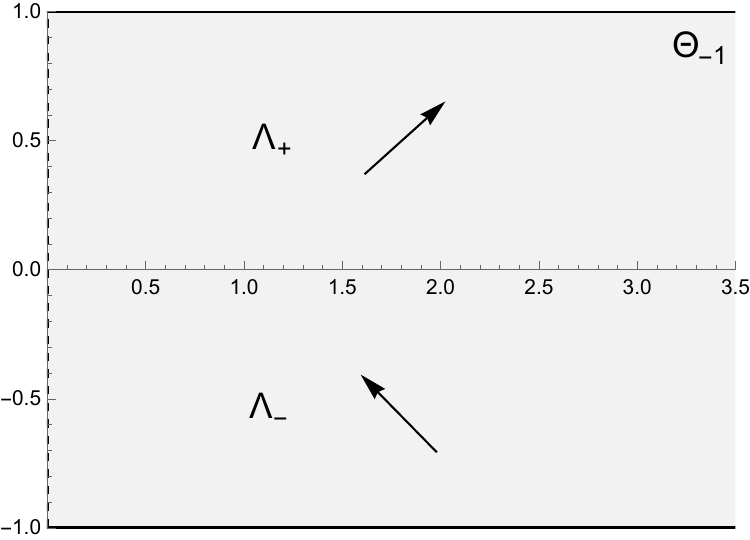}}}$
    \caption{The phase spaces $\Theta_1$ and $\Theta_{-1}$ for the choice $\h(t)=t^2$.}
    \label{Phasespace_parabola}
\end{figure}

By Proposition \ref{propsurfaceaxis}, let $\gamma_0(s)=(x_0(s),y_0(s))$ be the orbit that corresponds to the profile curve of the helicoidal $\mathcal{H}$-surface that meets its rotation axis. Up to a change of orientation, assume that $\gamma_0$ meets the $x=0$ axis at the point $p_1 = (0, 1)$. By Remark \ref{obsnegative}, we can analyse its behavior for $x_0(s)<0$ in the regions $\Lambda_j^*$, $j = 1,\dots, 4$, where $\Lambda_j^*$ is the reflection of $\Lambda_j$ with respect to the origin.

For $x(s)>0$, we obtain, by monotonicity properties, that $\gamma_0$ lies entirely in the region $\Lambda_2$ and converges to the axis $y=0$ as $s \to \infty$. For $x(s)<0$, we get the same behavior, noting that $\gamma_0$ lies in the region $\Lambda_3^*$. See Figure \ref{parabolaeixo}.

\begin{figure}
    \centering
    $\vcenter{\hbox{\includegraphics[scale=0.57]{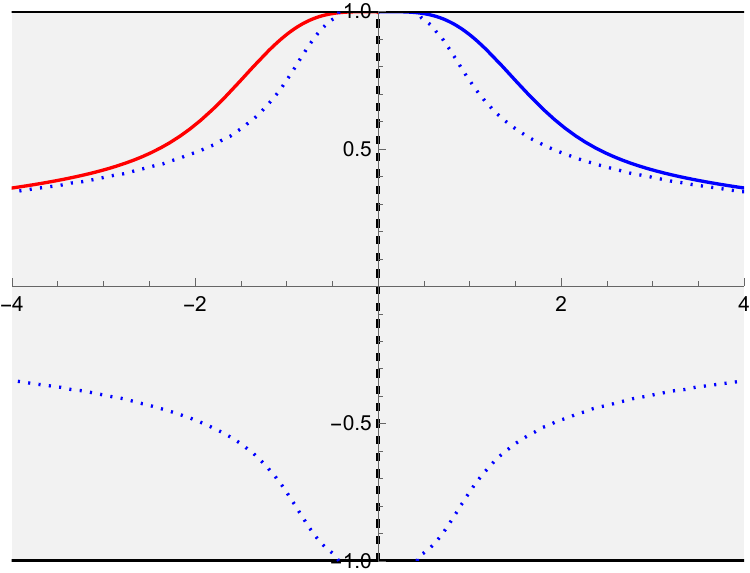}}}
    \hspace{0.8cm}
    \vcenter{\hbox{\includegraphics[scale=0.6]{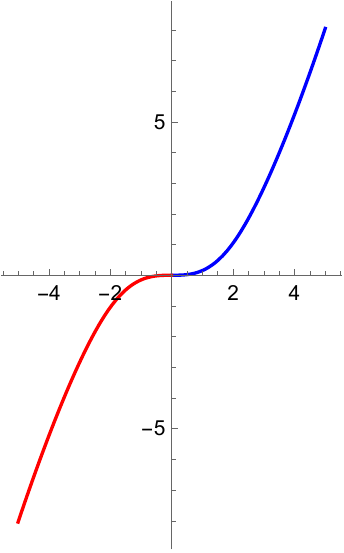}}}
    \hspace{0.8cm}
    \vcenter{\hbox{\includegraphics[scale=0.44]{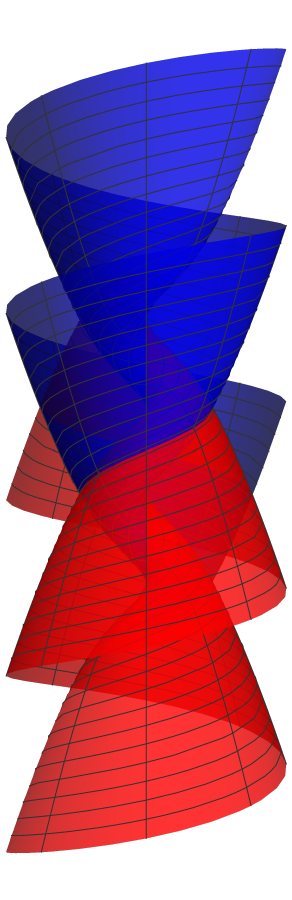}}}$
    \caption{The orbit $\gamma_0(s)$ in $\Theta_1$, where the blue component correspond to $x_0(s)>0$ and the red component to $x_0(s)<0$, its associated profile curve and its resulting helicoidal $\mathcal{H}$-surface, for the choice $\h(t) = t^2.$}
    \label{parabolaeixo}
\end{figure}

Now, for each $\varepsilon=\pm 1$, let $\gamma_\varepsilon(s)=(x_\varepsilon(s),y_\varepsilon(s))$ be the orbit that passes through $(r_0,0)$, for some $r_0>0$, and belongs to the phase space $\Theta_\varepsilon$ around that point. By monotonicity properties, note that, for $y_1 > 0$, the orbit $\gamma_1$ must intersect $\Gamma_1$, moving for the $\Lambda_2$ region and, then, converges at infinity to the $y=0$ axis, getting the same behavior for $y_1<0$. See Figure \ref{parabolapos}.

\begin{figure}
    \centering
    $\vcenter{\hbox{\includegraphics[scale=0.65]{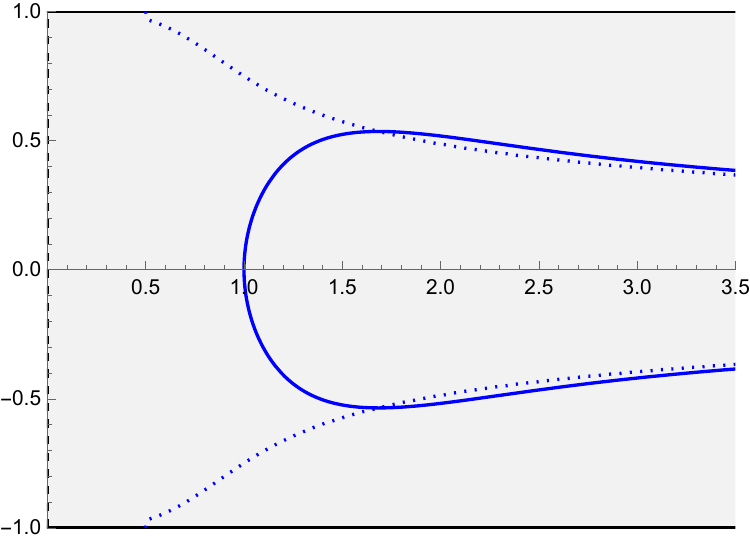}}}
    \hspace{0.8cm}
    \vcenter{\hbox{\includegraphics[scale=0.32]{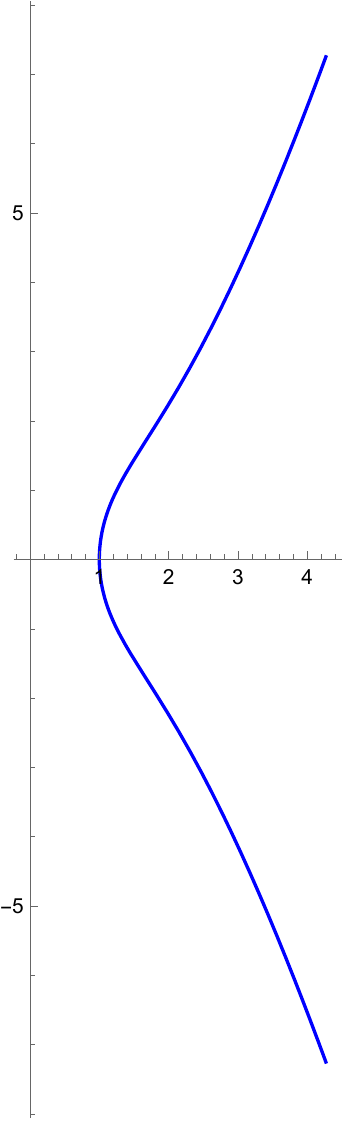}}}
    \hspace{0.8cm}
    \vcenter{\hbox{\includegraphics[scale=0.45]{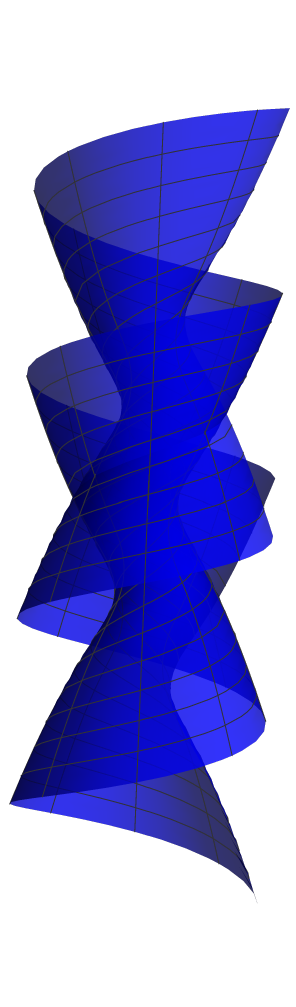}}}$
    \caption{The orbit $\gamma_1$ in $\Theta_1$, for the choices $\h(t) = t^2$ and $r_0=1$, its associated profile curve and its resulting helicoidal $\mathcal{H}$-surface.}
    \label{parabolapos}
\end{figure}

Observe that the orbit $\gamma_{-1}$ in $\Theta_{-1}$ is given by a horizontal graph $x=g(y)$, where $g \in C^1((a,b))$, for some $-1\leq a<0<b \leq 1$, such that $g(0)=r_0$ and $g$ restricted to $[0,b]$ is strictly increasing (resp. $g$ restricted to $[a,0]$ is strictly decreasing). Suppose that $g(y) \to \infty$ as $y \to \{a,b\}$. Consider the parametrization $\Psi(s,\theta)$ of the resulting helicoidal $\mathcal{H}$-surface $\Sigma$, as in \eqref{parametrization}. If we restrict $\theta \in (-\frac{\pi}{2},\frac{\pi}{2})$ and exclude the helix $\gamma(t)= (r_0\cos t, r_0\sin t, z_0 + c_0 t) $, the surface $\Sigma$ would be a bi-graph over the set 
$$\Omega = \R^2 \smallsetminus B_{r_0+\delta}(0) \cap \{ x>0 \},$$
for some $\delta>0$, such that $\mathcal{H}\geq H_0 >0$ in $\Omega$. This is impossible, by previous discussions using the maximum principle.

Therefore, $a = -1$, $b = 1$ and any orbit in $\Theta_{-1}$ has two limit endpoints of the form $(x_1, 1)$ and $(x_2,-1)$, for some $x_1,x_2 > 0$. In these points, the coordinate $z(s)$ attains, respectively, local maximum and local minimum values, so we can analyse orbits $\gamma_{-1}^{(1)}$ and $\gamma_{-1}^{(2)}$ in $\Theta_{1}$ that have, respectively, $(x_1, 1)$ and $(x_2,-1)$ as limit endpoints and, then, smoothly glue together the resulting $\mathcal{H}$-surfaces. By monotonicity properties of $\Theta_1$, the orbit $\gamma_{-1}^{(1)}$ stays in the region $\Lambda_2$ and converges at infinity to the $y=0$ axis as well as $\gamma_{-1}^{(2)}$ stays in the region $\Lambda_3$ and  converges at infinity to the same axis. See Figure \ref{parabolaneg}.

\begin{figure}
    \centering
    $\vcenter{\hbox{\includegraphics[scale=0.6]{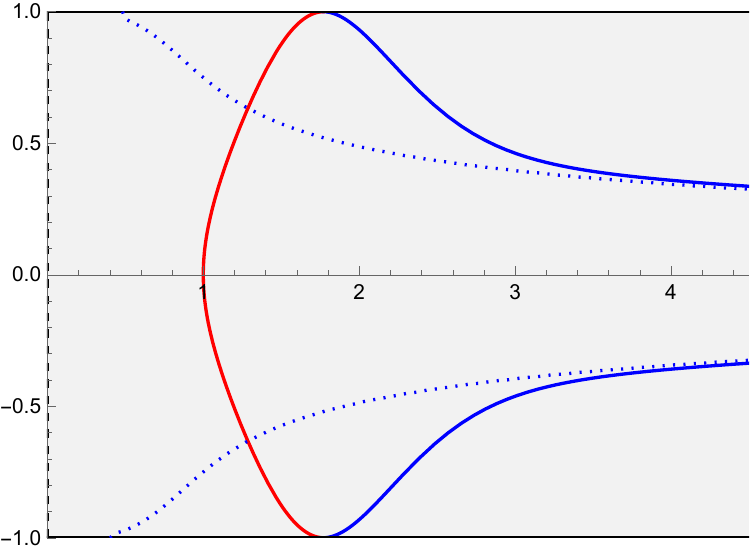}}}
    \hspace{0.8cm}
    \vcenter{\hbox{\includegraphics[scale=0.52]{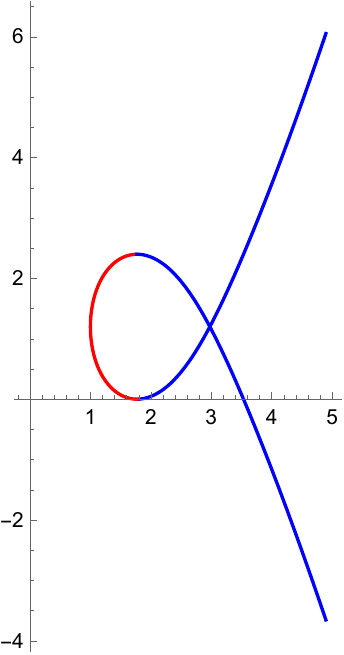}}}
    \hspace{0.8cm}
    \vcenter{\hbox{\includegraphics[scale=0.43]{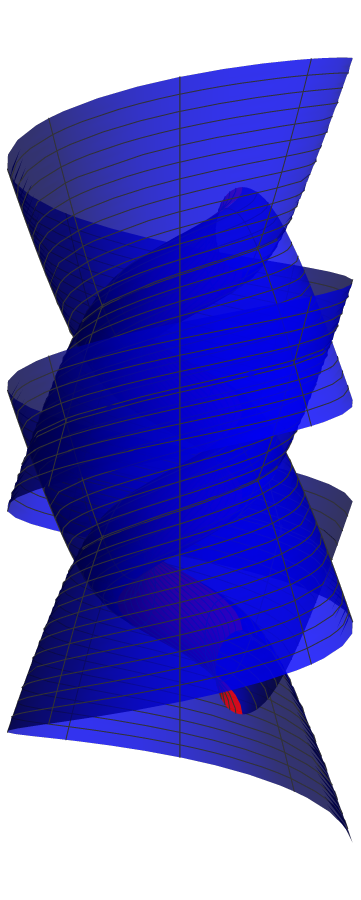}}}$
    \caption{On the left, the red curve corresponds to the orbit $\gamma_{-1}$ in $\Theta_{-1}$, for the choices $\h(t) = t^2$ and $r_0=1$. The blue curves correspond to the orbits $\gamma_{-1}^{(1)}$ and $\gamma_{-1}^{(2)}$ in $\Theta_1$. On the middle, their associated profile curve and, on the right, their resulting helicoidal $\mathcal{H}$-surface.}
    \label{parabolaneg}
\end{figure}

Now, consider $\h \in C^1([-1,1])$ such that $\h(0) = 0$, $\h(t) > 0$, for all $t > 0$ and $\h(t) < 0$, for all $t < 0$. This means that $\Gamma_1$ is contained in the region $\{y>0\}$ and $\Gamma_{-1}$ is contained in the region $\{y<0\}$. Consider the example $\h(t) = t$.

The curve $\Gamma_1$ in $\Theta_1$ together with $y=0$ divides the phase space $\Theta_1$ into three monotonicity regions $\Lambda_1^+$, $\Lambda_2^+$ and  $\Lambda_3^+$, where $\Lambda_1^+ := \Theta_1 \cap \{y<0\}$ and $\Lambda_2^+$ and  $\Lambda_3^+$ are contained in the $\{y>0\}$ region, such that $\Lambda_2^+$ and $\Lambda_1^+$ meet at the $y=0$ axis. In the phase space $\Theta_{-1}$, it happens an analogous partition, the curve $\Gamma_{-1}$ together with $y=0$ divides the phase space $\Theta_{-1}$ into three monotonicity regions $\Lambda_1^-$, $\Lambda_2^-$ and  $\Lambda_3^-$, where $\Lambda_3^- := \Theta_{-1} \cap \{y>0\}$ and $\Lambda_1^-$ and  $\Lambda_2^-$ are contained in the $\{y<0\}$ region, such that $\Lambda_2^-$ and $\Lambda_3^-$ meet at the $y=0$ axis. See Figure \ref{Phasespace_identidade}.

\begin{figure}
    \centering
    $\vcenter{\hbox{\includegraphics[scale=0.55]{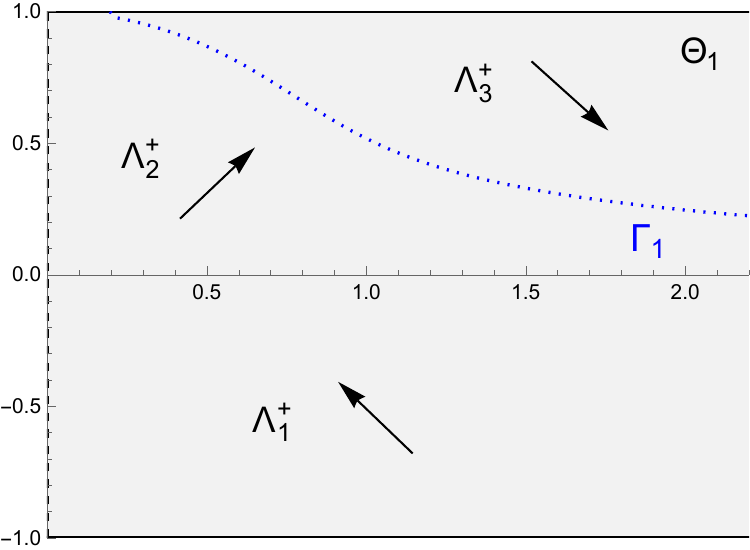}}}
    \hspace{0.7cm}
    \vcenter{\hbox{\includegraphics[scale=0.55]{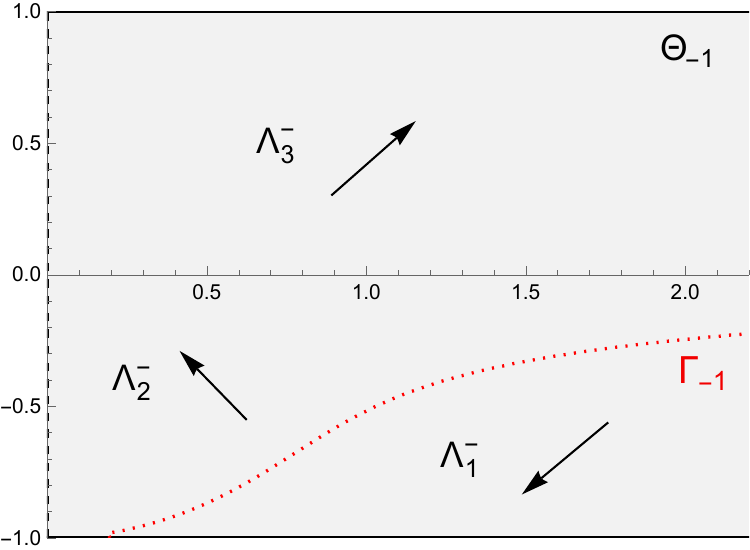}}}$
    \caption{The phase spaces $\Theta_1$ and $\Theta_{-1}$ for the choice $\h(t)=t$.}
    \label{Phasespace_identidade}
\end{figure}

By Proposition \ref{propsurfaceaxis}, let $\gamma_0(s)=(x_0(s),y_0(s))$ be the orbit that corresponds to the profile curve of the helicoidal $\mathcal{H}$-surface that meets its rotation axis. Up to a change of orientation, assume that $\gamma_0$ meets the $x=0$ axis at the point $p_1 = (0, 1)$. We can analyse the orbit behavior for $x_0(s)<0$ in the reflected monotonicity regions with respect to the origin, by previous discussion. 

Observe that, by monotonicity properties, the orbit $\gamma_0$ cannot stay in the phase space $\Theta_1$ for $x_0(s)<0$. This means that the coordinate $z(s)$ of its associated profile curve attains a local minimum in the rotation axis. So, we can analyse the orbit $\gamma_0^-$ in $\Theta_{-1}$ that has $(0,1)$ as an endpoint and smoothly glue together the resulting $\mathcal{H}$-surfaces. Note that $\gamma_0$ lies entirely in the region $\Lambda_3^+$ and converges at infinity to the $y=0$ axis as well as $\gamma_0^-$ lies entirely in the reflected region of $\Lambda_1^-$ with respect to the origin and converges to the $y=0$ axis as $s \to -\infty$. See Figure \ref{identidadeeixo}.

\begin{figure}
    \centering
    $\vcenter{\hbox{\includegraphics[scale=0.5]{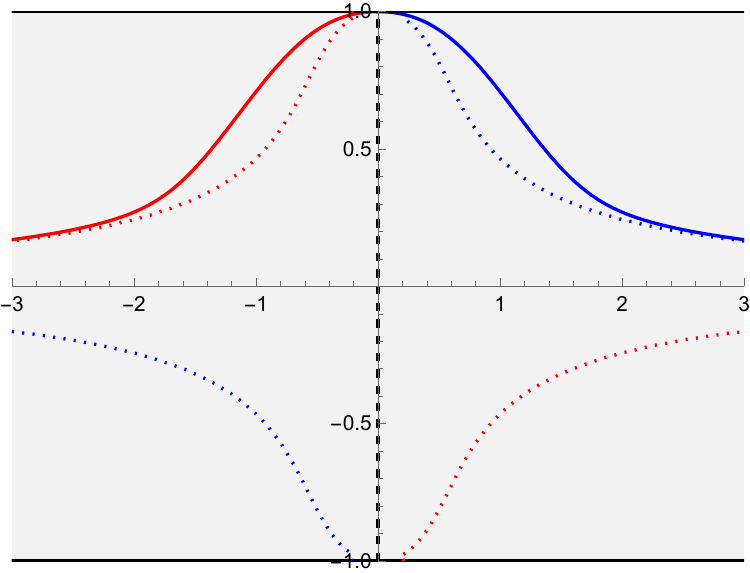}}}
    \hspace{0.8cm}
    \vcenter{\hbox{\includegraphics[scale=0.6]{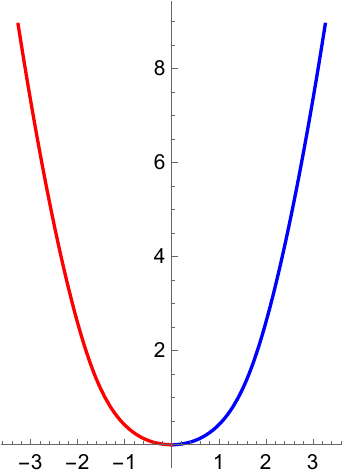}}}
    \hspace{0.8cm}
    \vcenter{\hbox{\includegraphics[scale=0.4]{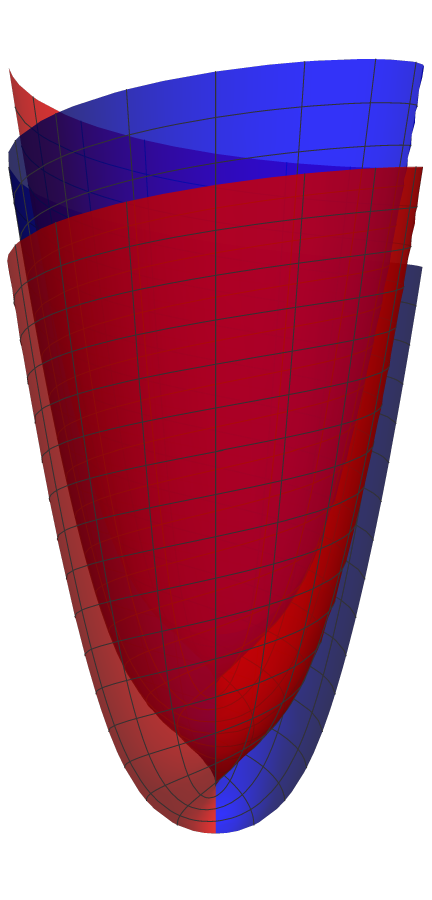}}}$
    \caption{On the left, the blue curve corresponds to the orbit $\gamma_0$ in $\Theta_1$ and the red curve to the orbit $\gamma_0^-$ in $\Theta_{-1}$, for the choice $\h(t)=t$. On the middle, their associated profile curve and, on the right, their resulting helicoidal $\mathcal{H}$-surface.}
    \label{identidadeeixo}
\end{figure}

Now, for each $\varepsilon=\pm 1$, let $\gamma_\varepsilon(s)=(x_\varepsilon(s),y_\varepsilon(s))$ be the orbit that passes through $(r_0,0)$, for some $r_0>0$, and belongs to the phase space $\Theta_\varepsilon$ around this point. By monotonicity properties, note that, for $y_1 > 0$, the orbit $\gamma_1$ must intersect $\Gamma_1$, moving to the region $\Lambda_3^+$ and, then, converges at infinity to the $y=0$ axis. For $y_1 < 0$, the orbit $\gamma_1$ has a limit endpoint of the form $(x_0,-1)$, for some $x_0 > r_0$. Therefore, we analyse an orbit $\gamma_1^-$ in $\Theta_{-1}$ that has $(x_1,-1)$ as a limit endpoint and smoothly glue the resulting $\mathcal{H}$-surfaces. By monotonicity properties, $\gamma_1^-$ stays in the region $\Lambda_1^-$ and converges to the $y=0$ axis as $s\to -\infty$. See Figure \ref{identidadeneg}. The behavior of $\gamma_{-1}$ is pretty similar to $\gamma_{1}$.

\begin{figure}
    \centering
    $\vcenter{\hbox{\includegraphics[scale=0.57]{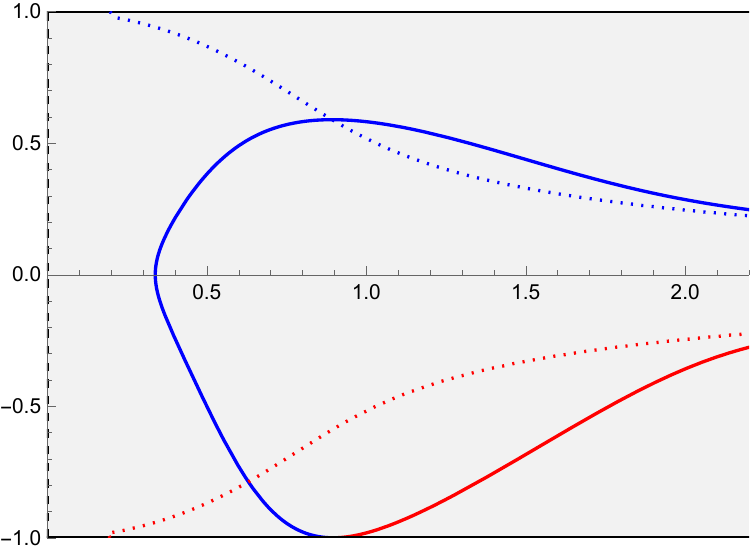}}}
    \hspace{0.8cm}
    \vcenter{\hbox{\includegraphics[scale=0.5]{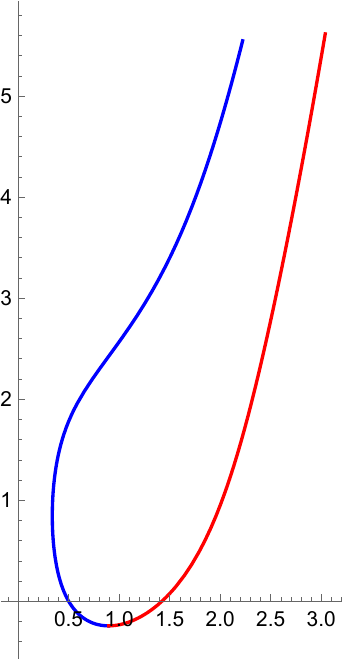}}}
    \hspace{0.6cm}
    \vcenter{\hbox{\includegraphics[scale=0.37]{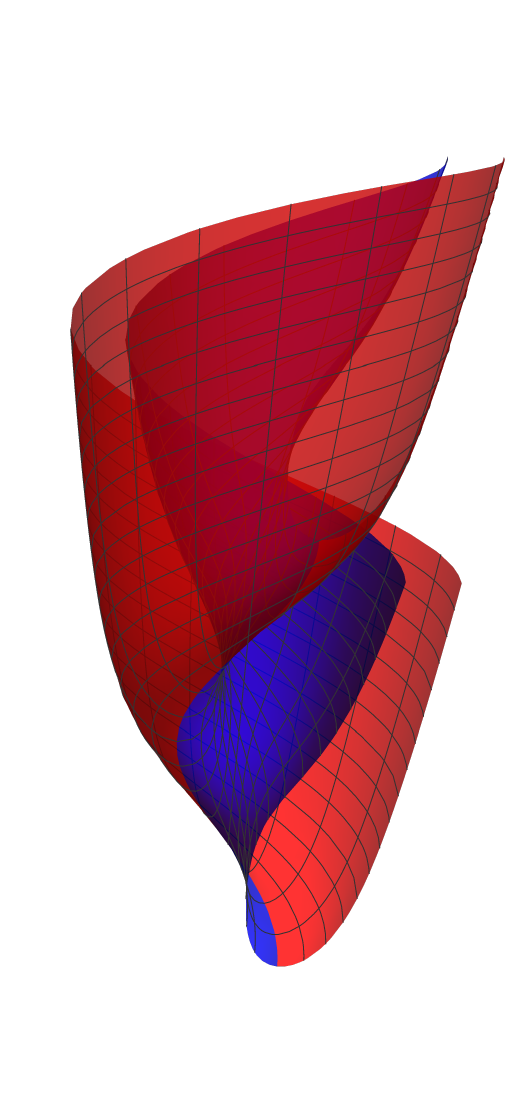}}}$
    \caption{On the left, the blue curve corresponds to the orbit $\gamma_1$ in $\Theta_1$ and the red curve to the orbit $\gamma_1^-$ in $\Theta_{-1}$, for the choice $\h(t)=t$. On the middle, their associated profile curve and, on the right, their resulting helicoidal $\mathcal{H}$-surface.}
    \label{identidadeneg}
\end{figure}

\subsection{Case \texorpdfstring{$\h(t_0)=0$}{h(t0)=0}, for some \texorpdfstring{$t_0 \neq 0$}{t0~=0}}

In this case, suppose that there exists one $t_0 \in (0,1)$ such that $\h(t_0) = 0$ and $\h(t)\neq 0$, for all $t \neq t_0$. Thus, $\h$ vanishes in the curve $\beta_0$ in both phase spaces. First, we will show a example of $\h$ such that $\h(t) > 0$, for all $t \neq t_0$. Finally, we will show a example of $\h$ such that $\h(t) > 0$, for all $t > t_0$, and $\h(t) < 0$, for all $t < t_0$. The biggest problem to extend the orbits behavior of these examples to the general case is not knowing exactly how $\Gamma_\varepsilon$ looks like.  

First, consider the example $\h(t) = (t-0.6)^2$, that is, $t_0 = 0.6$. Thus,
$$p_0^+ = \left( \frac{|c_0|0.6}{\sqrt{1-0.6^2}},1\right) = \left(\frac{3|c_0|}{4},1\right) \quad \text{and} \quad p_1 = \left(0, \frac{1}{\sqrt{1+0.36^2c_0^2}} \right).$$

Since $\h\geq 0$, the curve $\Gamma_{-1}$ does not exist. Therefore, $\Theta_{-1}$ has two monotonicity regions: $\Lambda^+ := \Theta_{-1} \cap \{y>0\}$ and $\Lambda^- := \Theta_{-1} \cap \{y<0\}$. The curve $\Gamma_1$ has three connected components, the first one connecting $p_1$ and $p_0^+$, the second one above the curve $\beta_0$ and the last one containing the equilibrium $e_0 \in \Theta_1$. Then, the curve $\Gamma_1$, together with the $y=0$ axis, divides the phase space $\Theta_1$ into six monotonicity regions $\Lambda_1^+,\dots,\Lambda_4^+,\Lambda_1^-,\Lambda_2^-$, four of them above the $y=0$ axis and two of them below $y=0$. See Figure \ref{Phasespace_pdesloc}.

\begin{figure}[h]
    \centering
    $\vcenter{\hbox{\includegraphics[scale=0.55]{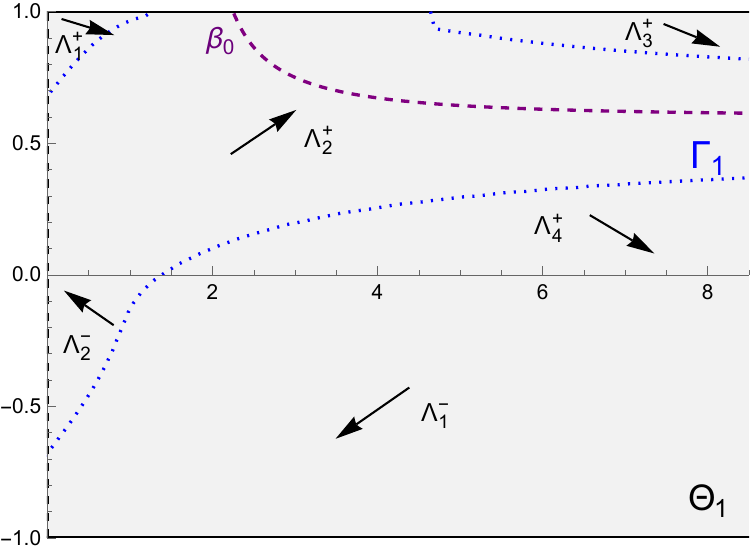}}}
    \hspace{0.7cm}
    \vcenter{\hbox{\includegraphics[scale=0.55]{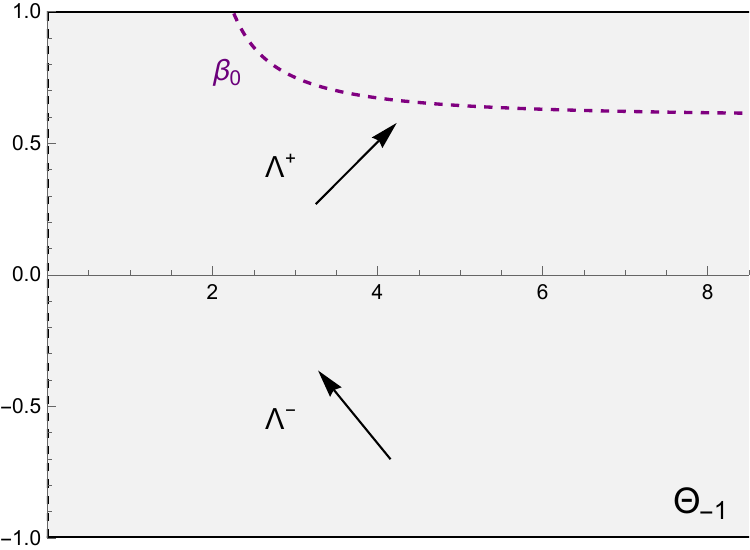}}}$
    \caption{The phase spaces $\Theta_1$ and $\Theta_{-1}$ for the choice $\h(t)=(t-0.6)^2$.}
    \label{Phasespace_pdesloc}
\end{figure}

By Proposition \ref{propsurfaceaxis}, let $\gamma_0(s) = (x_0(s),y_0(s))$ be the orbit that corresponds to the $\mathcal{H}$-surface that meets its rotation axis. Up to a change of orientation, suppose that $\gamma_0$ intersects the point $p_1 \in \Theta_1$. By monotonicity properties, for $x_0(s)>0$, we get that $\gamma_0$ lies in the $\Lambda_2^+$ region, intersect the curve $\Gamma_1$ and, then, stays in the region $\Lambda_3^+$ converging at infinity to the straight line $y=t_0$. For $x_0(s)<0$, we get that $\gamma_0$ converges to the equilibrium $-e_0$. See Figure \ref{paraboladesceixo}.

\begin{figure}
    \centering
    $\vcenter{\hbox{\includegraphics[scale=0.51]{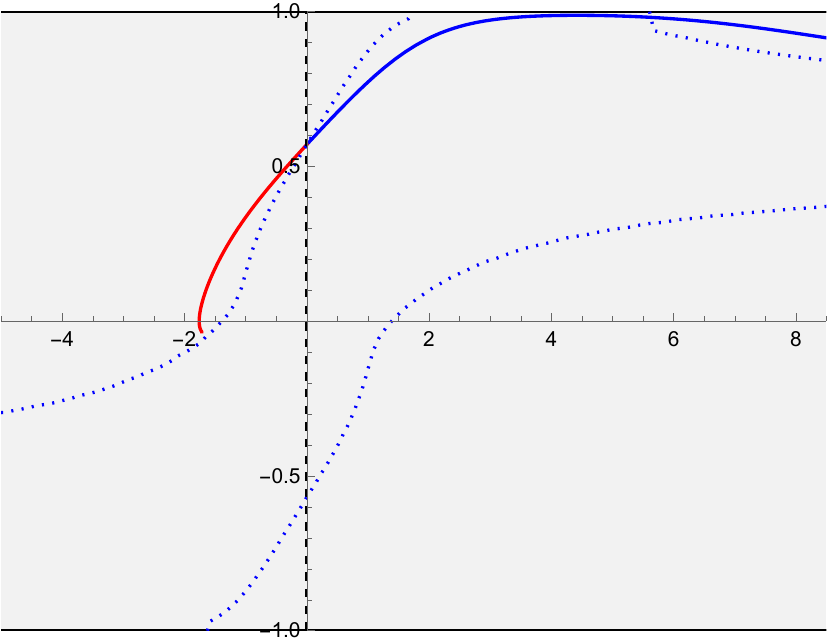}}}
    \hspace{0.8cm}
    \vcenter{\hbox{\includegraphics[scale=0.72]{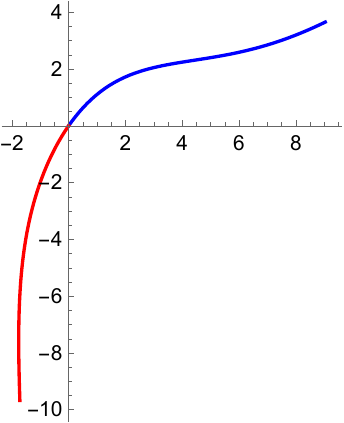}}}
    \hspace{0.8cm}
    \vcenter{\hbox{\includegraphics[scale=0.46]{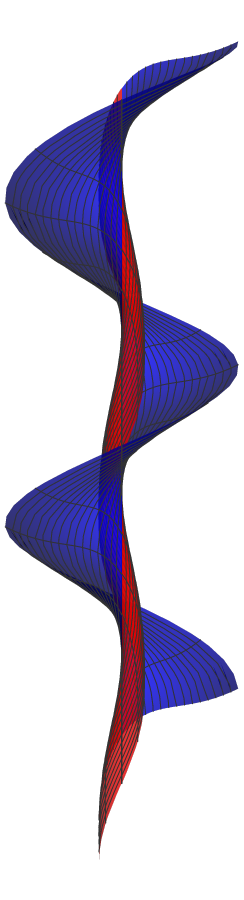}}}$
    \caption{The orbit $\gamma_0(s)$ in $\Theta_1$, where the blue component correspond to $x_0(s)>0$ and the red component to $x_0(s)<0$, its associated profile curve and its resulting helicoidal $\mathcal{H}$-surface, for the choice $\h(t) = (t-0.6)^2.$}
    \label{paraboladesceixo}
\end{figure}

Now, let $\gamma_1(s)$ be an orbit in $\Theta_1$ that passes through a point $(x_1,0) \in \Theta_1$, for some $0<x_1<\frac{1}{2\mathfrak{h}(0)}$, at some moment $s=s_0$. For $s>s_0$, we get by monotonicity properties that $\gamma_1$ lies in the $\Lambda_2^+$ region, intersect the curve $\Gamma_1$ and, then, stays in the region $\Lambda_3^+$ converging at infinity to the straight line $y=t_0$. For $s<s_0$, we get that $\gamma_1$ converges to the equilibrium $e_0$ as $s\to -\infty$. See Figure \ref{paraboladescpos}.

\begin{figure}
    \centering
    $\vcenter{\hbox{\includegraphics[scale=0.51]{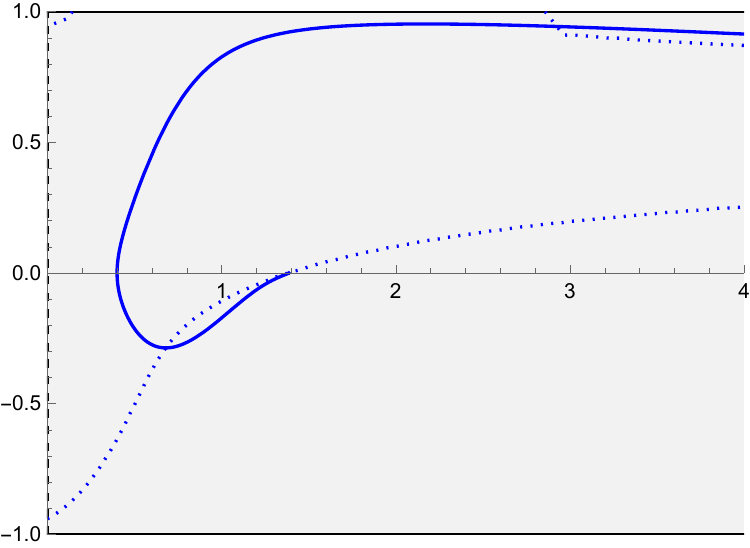}}}
    \hspace{0.8cm}
    \vcenter{\hbox{\includegraphics[scale=0.64]{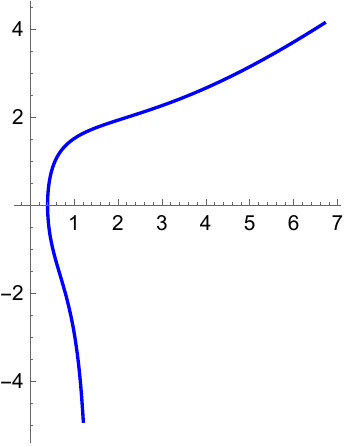}}}
    \hspace{0.8cm}
    \vcenter{\hbox{\includegraphics[scale=0.38]{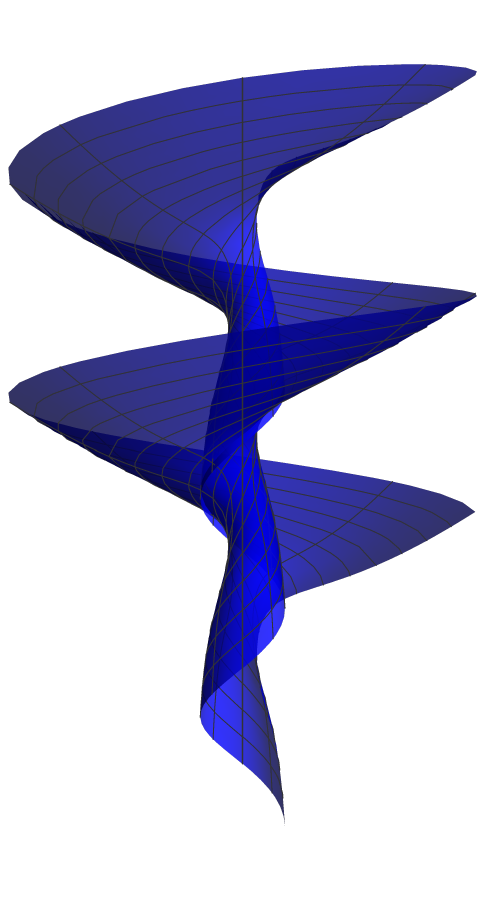}}}$
    \caption{The orbit $\gamma_1$ in $\Theta_1$, its associated profile curve and its resulting helicoidal $\mathcal{H}$-surface, for the choice $\h(t)=(t-0.6)^2$.}
    \label{paraboladescpos}
\end{figure}

Let now $\gamma_{-1}(s)$ be an orbit in $\Theta_{-1}$. By previous discussion, note that there exist some moments $s_1, s_2 \in \R$, with $s_1<s_2$, such that $\gamma_{-1}(s_1)=(x_1,-1)$ and $\gamma_{-1}(s_2)=(x_2,1)$, for some $x_1,x_2 >0$. We can analyse orbits in $\Theta_1$ that have these points as limit endpoints and smoothly glue the resulting $\mathcal{H}$-surfaces. Let $\gamma_{-1}^1$ be the orbit in $\Theta_1$ that has $(x_1,-1)$ as endpoint. By monotonicity properties, we obtain that $\gamma_{-1}^1$ converges to the equilibrium $e_0 \in \Theta_1$ as $s \to -\infty$. 

Next, let $\gamma_{-1}^2$ be the orbit in $\Theta_2$ that has $(x_2,1)$ as a limit endpoint. There are two possibilities for the behavior of $\gamma_{-1}^2$. If $x_2 \geq \frac{3|c_0|}{4}$, that is, $(x_2,1)$ is $p_0^+$ or is to the right of $p_0^+$, then $\gamma_{-1}^2$ stays in the region $\Lambda_3^+$ and converges at infinity to the straight line $y=t_0$. See Figure \ref{paraboladescneg}.

\begin{figure}
    \centering
    $\vcenter{\hbox{\includegraphics[scale=0.38]{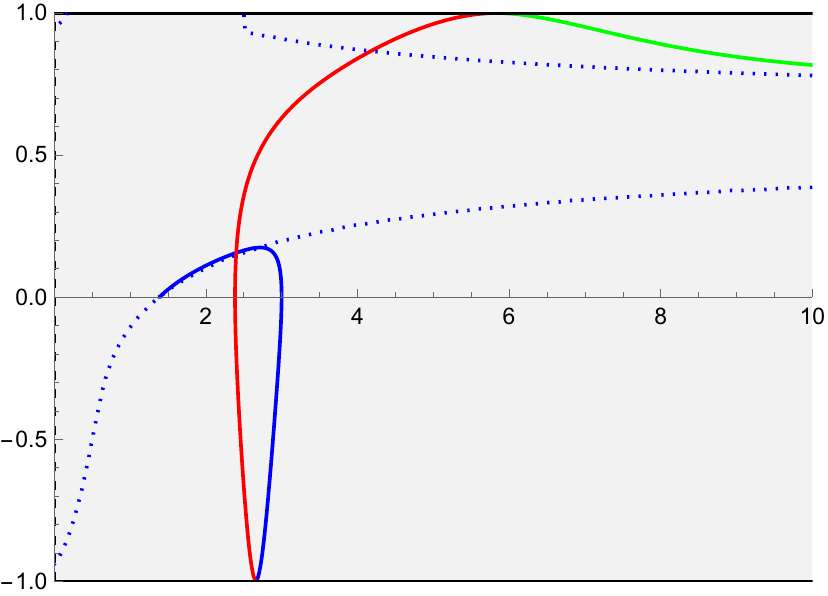}}}
    \hspace{0.6cm}
    \vcenter{\hbox{\includegraphics[scale=0.7]{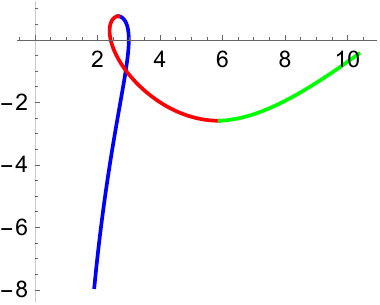}}}
    \hspace{0.6cm}
    \vcenter{\hbox{\includegraphics[scale=0.31]{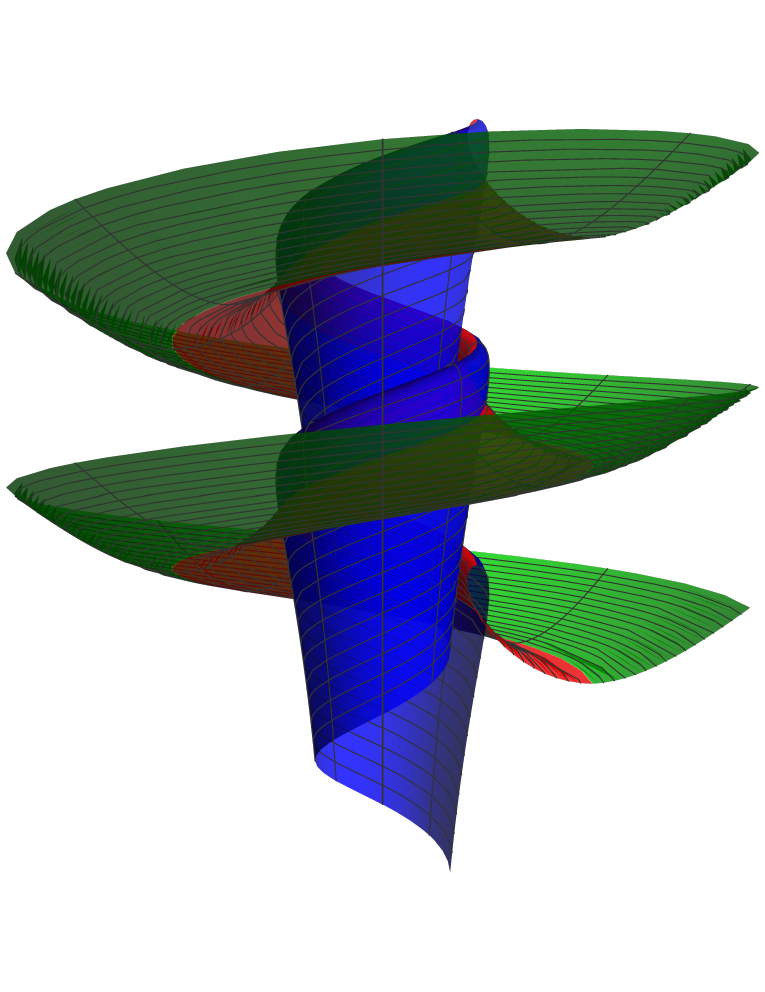}}}$
    \caption{On the left, the red curve corresponds to the orbit $\gamma_{-1}$ in $\Theta_{-1}$, the blue curve to the orbit $\gamma_{-1}^1$ in $\Theta_1$ and the green curve to the orbit $\gamma_{-1}^2$ in $\Theta_1$, with $x_2 \geq \frac{3|c_0|}{4}$, for the choice $\h(t) = (t-0.6)^2$. On the middle, their associated profile curve and, on the right, their resulting helicoidal $\mathcal{H}$-surface.}
    \label{paraboladescneg}
\end{figure}

Now, if $x_2 < \frac{3|c_0|}{4}$, that is, $(x_2,1)$ is to the left of $p_0^+$, then, by monotonicity properties, the orbit $\gamma_{-1}^2$ lies in the region $\Lambda_1^+$ around $(x_2,1)$, intersects the curve $\Gamma_1$ moving to the $\Lambda_2^+$ region, intersects again the curve $\Gamma_1$ and stays in the region $\Lambda_3^+$, converging at infinity to the straight line $y=t_0$. See Figure \ref{paraboladescneg2}.

\begin{figure}
    \centering
    $\vcenter{\hbox{\includegraphics[scale=0.62]{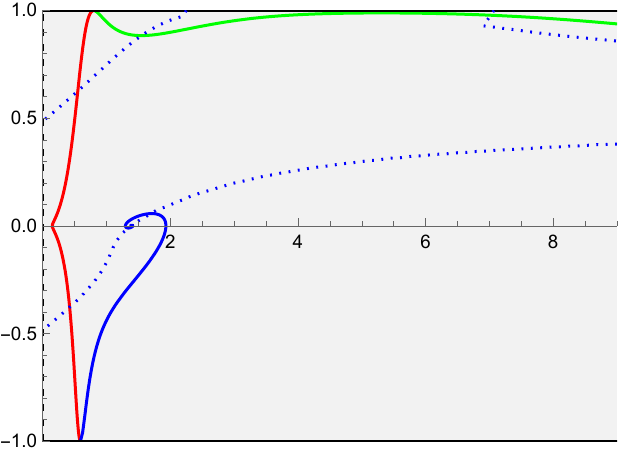}}}
    \hspace{0.8cm}
    \vcenter{\hbox{\includegraphics[scale=0.44]{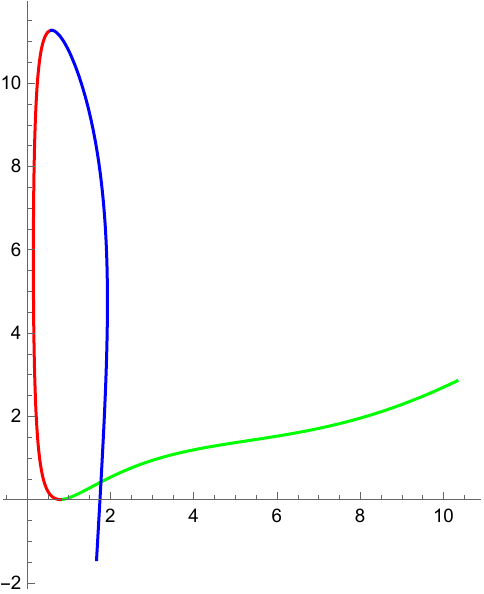}}}
    \hspace{0.8cm}
    \vcenter{\hbox{\includegraphics[scale=0.33]{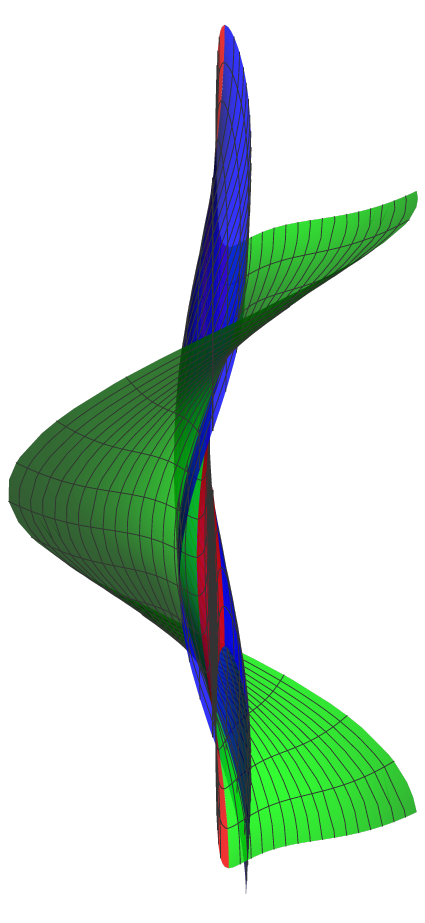}}}$
    \caption{On the left, the red curve corresponds to the orbit $\gamma_{-1}$ in $\Theta_{-1}$, the blue curve to the orbit $\gamma_{-1}^1$ in $\Theta_1$ and the green curve to the orbit $\gamma_{-1}^2$ in $\Theta_1$, with $x_2 < \frac{3|c_0|}{4}$, for the choice $\h(t) = (t-0.6)^2$. On the middle, their associated profile curve and, on the right, their resulting helicoidal $\mathcal{H}$-surface.}
    \label{paraboladescneg2}
\end{figure}

Finally, consider the example $\h(t)=(t-0.5)(t+2)$, that is, $t_0 = 0.5$. Thus,
$$p_0^+ = \left( \frac{|c_0|0.5}{\sqrt{1-0.5^2}},1\right) = \left(\frac{|c_0|\sqrt{3}}{3},1\right) \quad \text{and} \quad p_{-1} = \left(0, \frac{1}{\sqrt{1+c_0^2}} \right).$$

Note that, in this case, we have that $\h > 0$ above the curve $\beta_0$ and $\h < 0$ below it. This means that the curve $\Gamma_1$ is connected and stays above $\beta_0$, starting from the point $p_0^+$ and converging at infinity to the straight line $y=t_0$. Therefore, the phase space $\Theta_1$ has three monotonicity regions $\Lambda_1^+,\Lambda_2^+,\Lambda_3^+$, where $\Lambda_1^+ := \Theta_1 \cap \{ y < 0 \}$ and $\Lambda_2^+$ and $\Lambda_3^+$ are contained in the region $\{y>0\}$, with $\Lambda_3^+$ above $\Gamma_1$.

Moreover, the curve $\Gamma_{-1}$ has two connected components, where the first one connects the points $p_{-1}$ and $p_0^+$ and the second one contains the equilibrium $e_0$. Therefore, the phase space $\Theta_{-1}$ has five monotonicity regions $\Lambda_1^-,\dots,\Lambda_5^-$, three of them above the $y=0$ axis and two of them below it. See Figure \ref{Phasespace_iddesloc}.

\begin{figure}[h]
    \centering
    $\vcenter{\hbox{\includegraphics[scale=0.55]{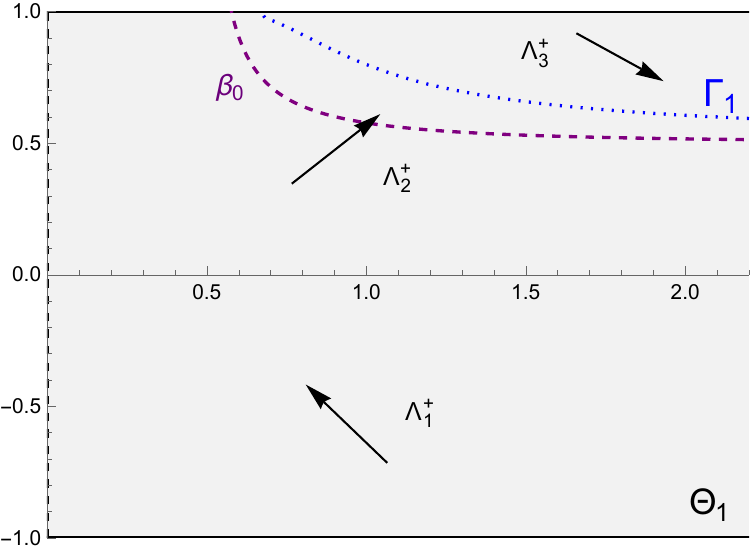}}}
    \hspace{0.7cm}
    \vcenter{\hbox{\includegraphics[scale=0.55]{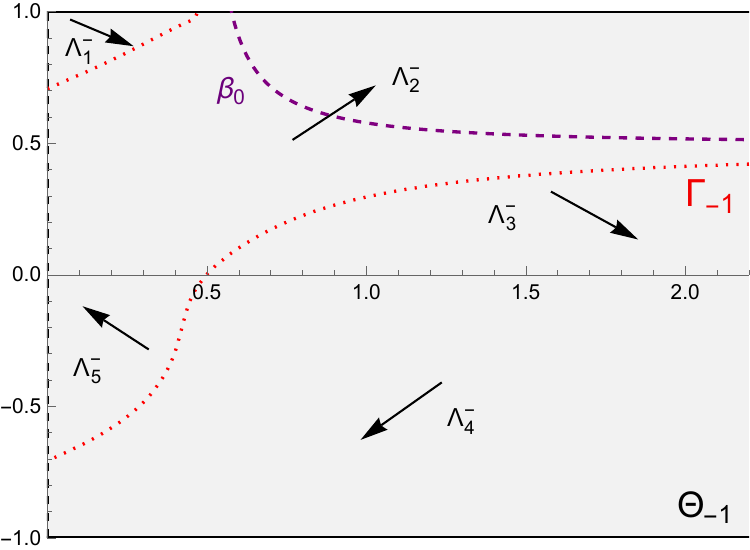}}}$
    \caption{The phase spaces $\Theta_1$ and $\Theta_{-1}$ for the choice $\h(t)=(t-0.5)(t+2)$.}
    \label{Phasespace_iddesloc}
\end{figure}

By Proposition \ref{propsurfaceaxis}, let $\gamma_0(s) = (x_0(s),y_0(s))$ be the orbit that corresponds to the $\mathcal{H}$-surface that meets its rotation axis. Up to a change of orientation, suppose that $\gamma_0$ intersects the point $p_{-1} \in \Theta_{-1}$. By monotonicity properties, for $x_0(s)<0$, we get that $\gamma_0$ converges to the equilibrium $-e_0$ as $s\to - \infty$. For $x_0(s)>0$, we get that $\gamma_0$ lies in the $\Lambda_2^-$ region and, at some moment $s_0 \in \R$, goes to a limit endpoint $(x_1,1)$, with $x_1 \geq \frac{|c_0|\sqrt{3}}{3}$. Considering $\gamma_0^+$ the orbit in $\Theta_1$ that also has $(x_1,1)$ as an endpoint, we can smoothly glue the resulting $\mathcal{H}$-surfaces. It follows from monotonicity properties that $\gamma_0^+$ stays in the $\Lambda_3^+$ region and converges at infinity to the straight line $y=t_0$. See Figure \ref{identidadedesloceixo}.

\begin{figure}
    \centering
    $\vcenter{\hbox{\includegraphics[scale=0.45]{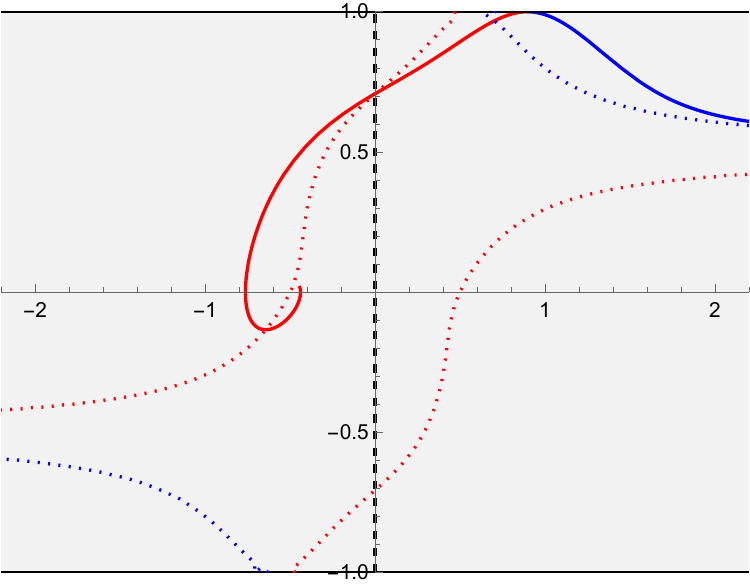}}}
    \hspace{0.8cm}
    \vcenter{\hbox{\includegraphics[scale=0.6]{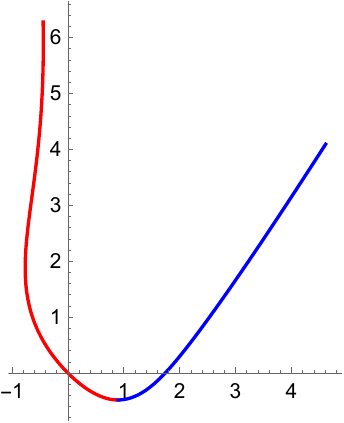}}}
    \hspace{0.8cm}
    \vcenter{\hbox{\includegraphics[scale=0.35]{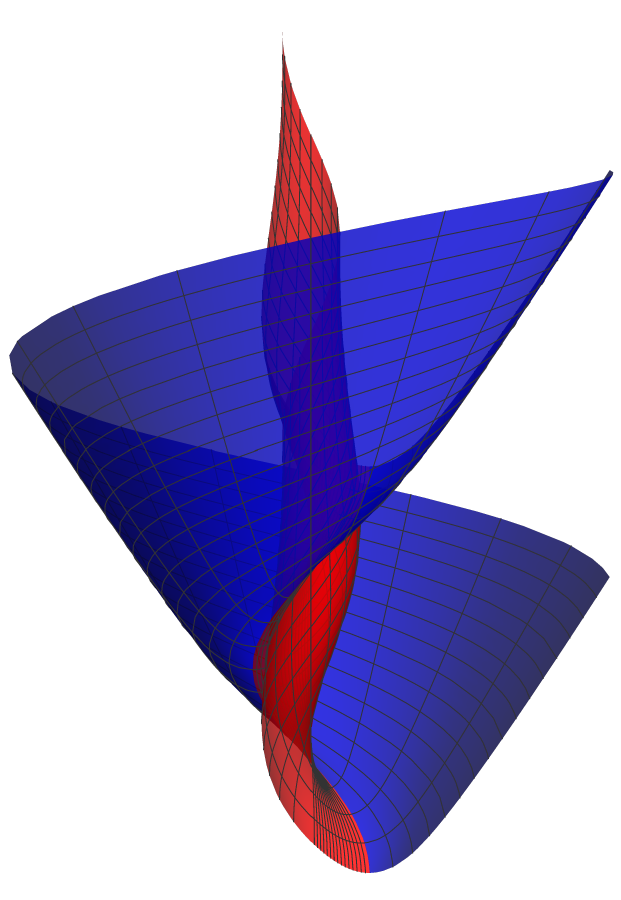}}}$
    \caption{On the left, the red curve corresponds to the orbit $\gamma_0$ in $\Theta_{-1}$ and the blue curve to the orbit $\gamma_0^+$ in $\Theta_1$, for the choice $\h(t) = (t-0.5)(t+2)$. On the middle, their associated profile curve and, on the right, their resulting helicoidal $\mathcal{H}$-surface.}
    \label{identidadedesloceixo}
\end{figure}

Let $\gamma_1(s)$ be an orbit in $\Theta_1$ that passes through a point $(x_0,0) \in \Theta_1$, for some $x_0>0$, at some moment $s=s_0$. By previous discussion and Remark \ref{convergeborda}, note that there exists $s_1 \in \R$, with $s_1 < s_0$, such that $\gamma_1(s_1) = (x_1,-1)$, for some $x_1 > x_0$. Consider $\gamma_1^-$ the orbit in $\Theta_{-1}$ that has $(x_1,-1)$ as an endpoint, which we can smoothly glue together the resulting $\mathcal{H}$-surfaces. By monotonicity properties, $\gamma_1^-$ converges to the equilibrium $e_0$ as $s \to -\infty$.

For $s > s_0$, there are two possibilities for the orbit $\gamma_1$ behavior. The first one is that $\gamma_1$ intersects the curve $\Gamma_1$ and, then, stays in the $\Lambda_3^+$ converging at infinity to the straight line $y=t_0$. See Figure \ref{identidadedeslocpos}.

\begin{figure}
    \centering
    $\vcenter{\hbox{\includegraphics[scale=0.5]{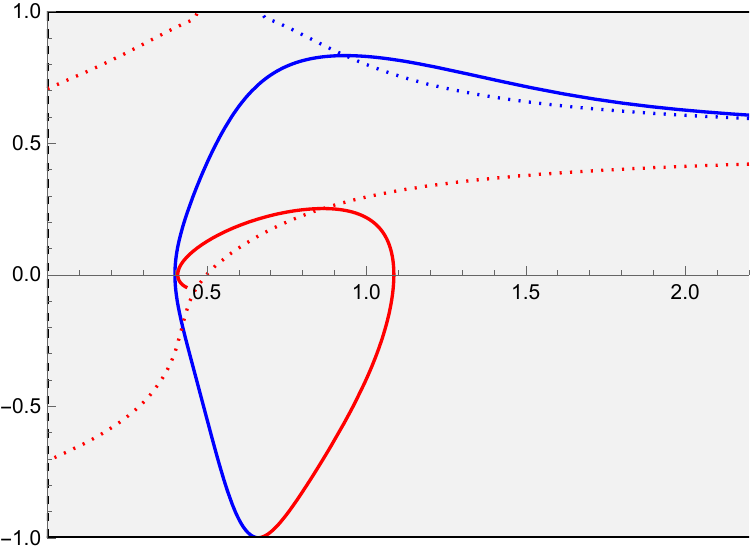}}}
    \hspace{0.8cm}
    \vcenter{\hbox{\includegraphics[scale=0.55]{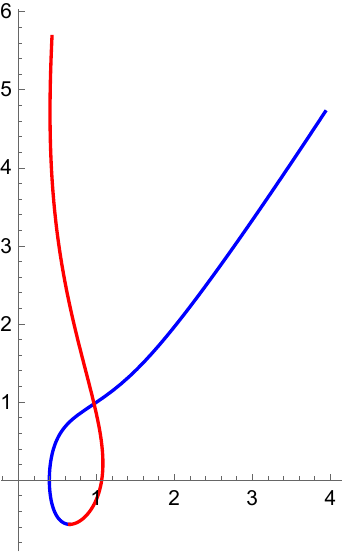}}}
    \hspace{0.8cm}
    \vcenter{\hbox{\includegraphics[scale=0.35]{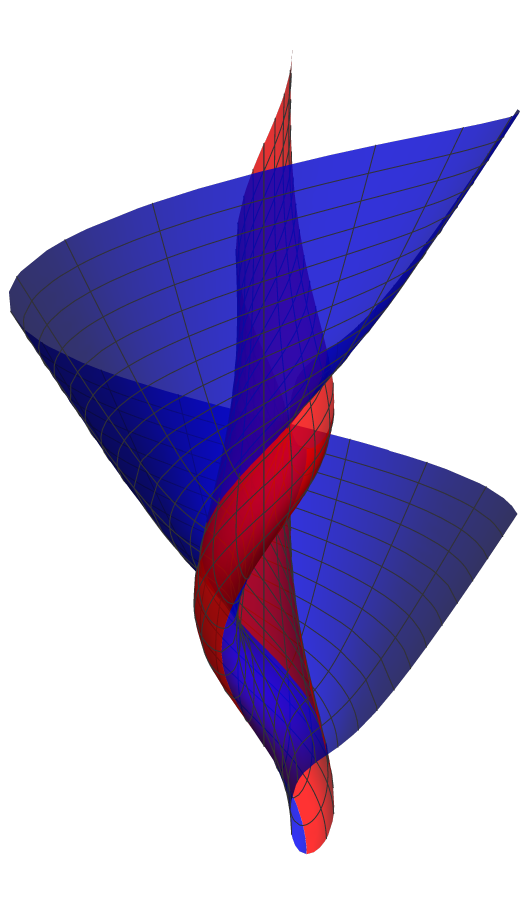}}}$
    \caption{On the left, the blue curve corresponds to the orbit $\gamma_1$ in $\Theta_1$ and the red curve to the orbit $\gamma_1^-$ in $\Theta_{-1}$, for the choice $\h(t) = (t-0.5)(t+2)$. On the middle, their associated profile curve and, on the right, their resulting helicoidal $\mathcal{H}$-surface.}
    \label{identidadedeslocpos}
\end{figure}

The second possibility is that the orbit $\gamma_1$ has an endpoint $(x_2,1)$, with $x_2 < \frac{|c_0|\sqrt{3}}{3}$, that is, $(x_2,1)$ is to the left of $p_0^+$. Consider $\gamma_1^2$ the orbit in $\Theta_{-1}$ that has $(x_2,1)$ as an endpoint, which we can smoothly glue together the resulting $\mathcal{H}$-surfaces. Thus, we get, by monotonicity properties, that $\gamma_1^2$ lies in the $\Lambda_1^-$ region for points near $(x_2,1)$, intersects the curve $\Gamma_{-1}$ at some moment moving to the $\Lambda_2^-$ region and, then, goes to another endpoint $(x_3,1)$, with $x_3 > \frac{|c_0|\sqrt{3}}{3}$. Let $\gamma_1^3$ be the orbit in $\Theta_1$ that has $(x_3,1)$ as an endpoint. Observe that $\gamma_1^3$ stays in the $\Lambda_3^+$ region and converges at infinity to the straight line $y=t_0$. See Figure \ref{identidadedeslocneg}.

\begin{figure}
    \centering
    $\vcenter{\hbox{\includegraphics[scale=0.52]{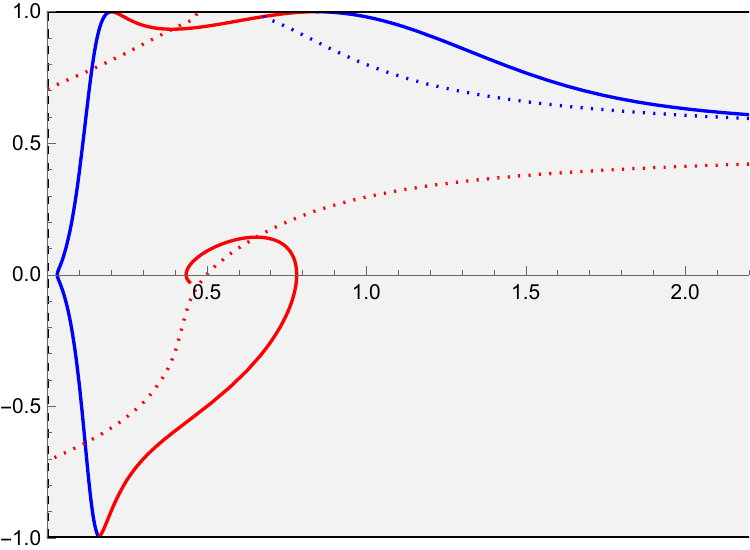}}}
    \hspace{0.8cm}
    \vcenter{\hbox{\includegraphics[scale=0.52]{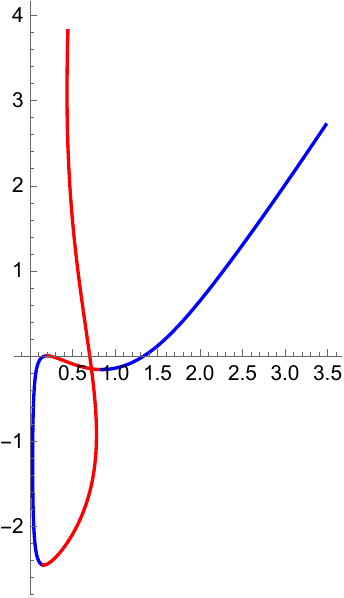}}}
    \hspace{0.8cm}
    \vcenter{\hbox{\includegraphics[scale=0.35]{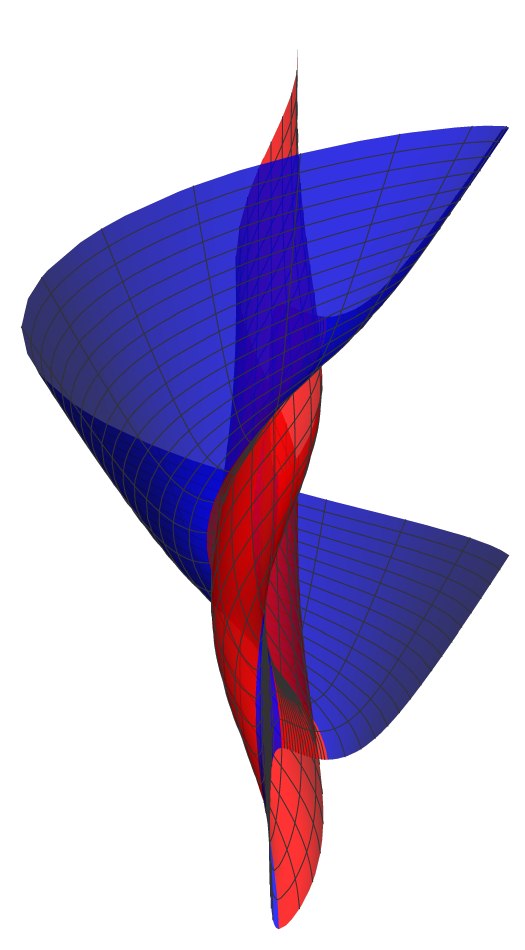}}}$
    \caption{On the left, the blue curves correspond to the orbits $\gamma_1$ and $\gamma_1^3$ in $\Theta_1$ and the red curves to the orbits $\gamma_1^-$ and $\gamma_1^2$ in $\Theta_{-1}$, for the choice $\h(t) = (t-0.5)(t+2)$. On the middle, their associated profile curve and, on the right, their resulting helicoidal $\mathcal{H}$-surface.}
    \label{identidadedeslocneg}
\end{figure}

Lastly, let $\gamma_{-1}(s)$ be an orbit in $\Theta_{-1}$ that passes at some moment $s=s_1$ through a point $(x_{-1},0)$, with $0<x_{-1}<\frac{1}{2\mathfrak{h}(0)}$, that is, $(x_{-1},0)$ is to the left of the equilibrium $e_0 \in \Theta_{-1}$. By monotonicity properties, the orbit $\gamma_{-1}(s)$ converges to the equilibrium $e_0$ as $s \to -\infty$. Moreover, for $s>s_1$, the orbit $\gamma_{-1}$ stays in the $\Lambda_2^-$ region and has an endpoint $(x_4,1)$, with $x_4 > \frac{|c_0|\sqrt{3}}{3}$. Considering $\gamma_{-1}^+$ the orbit in $\Theta_1$ that has $(x_4,1)$ as an endpoint, which we can smoothly glue together the resulting $\mathcal{H}$-surfaces, we get that $\gamma_{-1}^+$ stays in the $\Lambda_3^+$ region and converges at infinity to the straight line $y=t_0$. See Figure \ref{identidadedeslocneg2}.

\begin{figure}[h]
    \centering
    $\vcenter{\hbox{\includegraphics[scale=0.48]{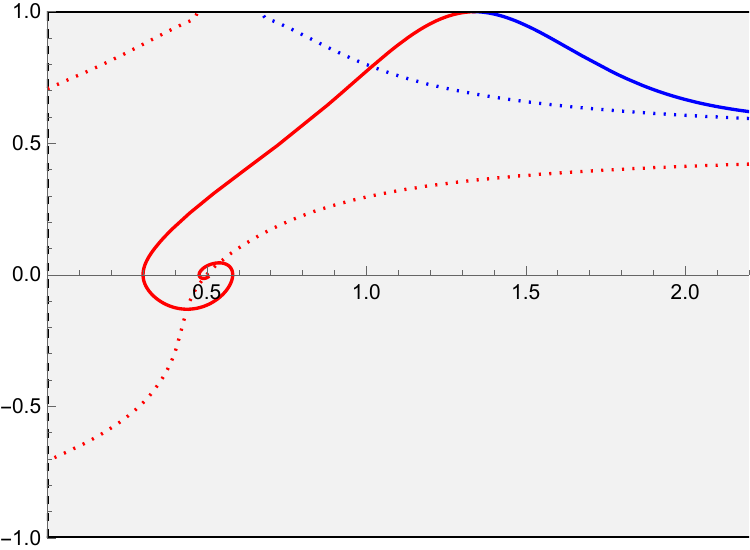}}}
    \hspace{0.8cm}
    \vcenter{\hbox{\includegraphics[scale=0.6]{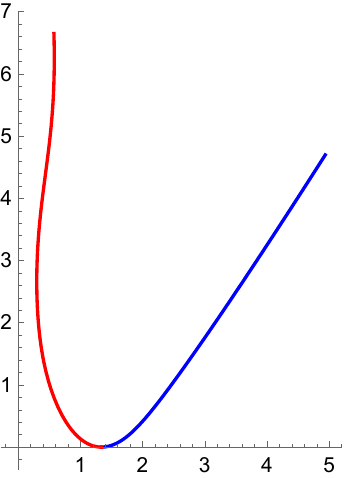}}}
    \hspace{0.8cm}
    \vcenter{\hbox{\includegraphics[scale=0.33]{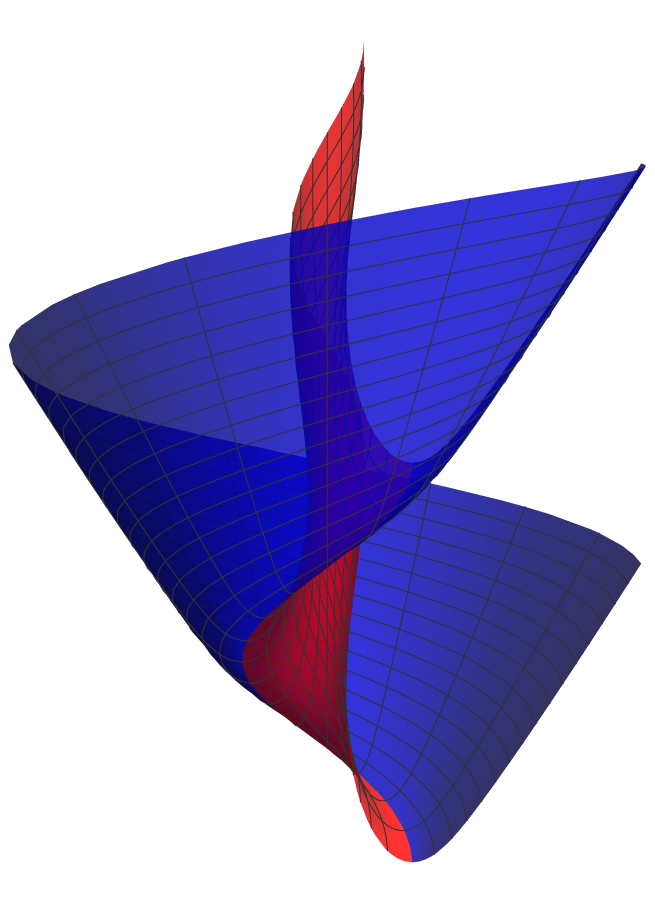}}}$
    \caption{On the left, the red curve corresponds to the orbit $\gamma_{-1}$ in $\Theta_{-1}$ and the blue curve to the orbit $\gamma_{-1}^+$ in $\Theta_1$, for the choice $\h(t) = (t-0.5)(t+2)$. On the middle, their associated profile curve and, on the right, their resulting helicoidal $\mathcal{H}$-surface.}
    \label{identidadedeslocneg2}
\end{figure}

\section*{Acknowledgements}

This work was done in Instituto de Matemáticas of University of Granada and the author is supported by BEPE - FAPESP (São Paulo Research Foundation) Grant number 2022/14381-5. The author is grateful to Prof. José A. Gálvez for his supervisory on the project and all his collaboration.






\vskip 0.2cm

\noindent Aires E. M. Barbieri

\noindent Instituto de Ciências Matemáticas e de Computação, \\ Universidade de São Paulo, São Carlos (Brazil).

\noindent  e-mail: {\tt airesb@usp.br}

\end{document}